\renewcommand{\gets}{\leftarrow}
\newcommand{\iqr}{\mathsf{iqr}}
\newcommand{\apr}{\mathrm{OptRitz}}
\renewcommand{\r}{\theta}
\newcommand{\K}{B}
\newcommand{\tapr}{T_{\mathrm{\apr}}}
\newcommand{\pot}{\psi_k}
\newcommand{\exactqr}{\mathsf{iqr}}
\newcommand{\find}{\mathrm{Find}}
\newcommand{\exc}{\mathrm{Exc}}
\newcommand{\tnet}{N_{\mathsf{net}}}
\newcommand{\corner}[2]{#1_{(#2)}}
\newcommand{\E}{\mathbb{E}}
\renewcommand{\P}{\mathbb{P}}
\newcommand{\acc}{\delta}
\newcommand{\cp}{\alpha}
\renewcommand{\next}[1]{\widehat{#1}}
\renewcommand{\bC}{\mathbb{C}}
\theoremstyle{plain}
\newtheorem{qd}{Question}
\title{Global Convergence of Hessenberg Shifted QR I: Exact Arithmetic}
\author{Jess Banks\thanks{\texttt{jess.m.banks@berkeley.edu}. Supported by NSF GRFP Grant DGE-1752814 and NSF Grant  CCF-2009011.}\\ UC Berkeley \and  Jorge Garza-Vargas\thanks{\texttt{jgarzavargas@berkeley.edu}. Supported by NSF Grant  CCF-2009011.}\\ UC Berkeley \and  Nikhil Srivastava\thanks{\texttt{nikhil@math.berkeley.edu}. Supported by NSF Grant  CCF-2009011.}  \\    UC Berkeley }
\date{\today}
\begin{document}
\maketitle

\begin{abstract}
Rapid convergence of the shifted QR algorithm on symmetric matrices was shown
	more than fifty years ago. Since then, despite significant interest and
	its practical relevance, an understanding of the dynamics and convergence properties of the shifted QR algorithm on nonsymmetric matrices has remained elusive.
  
We introduce a new family of shifting strategies for the Hessenberg shifted QR algorithm. We prove that when the input is a diagonalizable Hessenberg matrix $H$ of bounded \emph{eigenvector condition number} $\kappa_V(H)$ ---  defined as the minimum condition number of $V$ over all diagonalizations $VDV^{-1}$ of $H$ --- then the shifted QR algorithm with a certain strategy from our family is guaranteed to converge rapidly to a  Hessenberg matrix with a zero subdiagonal entry, in exact arithmetic. Our convergence result is nonasymptotic, showing that the geometric mean of certain subdiagonal entries of $H$ decays by a fixed constant in every $QR$ iteration. The  arithmetic cost of implementing each iteration of our strategy scales roughly logarithmically in the eigenvector condition number $\kappa_V(H)$, which is a measure of the nonnormality of $H$.
 
 The key ideas in the design and analysis of our strategy are: (1) We  are able to precisely characterize when a certain shifting strategy based on Ritz values stagnates. We use this information to design  certain ``exceptional shifts'' which are guaranteed to escape stagnation whenever it occurs. (2) We use higher degree shifts (of degree roughly $\log \kappa_V(H)$) to dampen transient effects due to nonnormality, allowing us to treat nonnormal matrices in a manner similar to normal matrices.
  
 \end{abstract}

\tableofcontents
\newcommand{\Pol}{\mathcal{P}}
\newcommand{\Sh}{\mathrm{Sh}}
\renewcommand{\H}{\mathbb{H}}
\section{Introduction}

The Hessenberg shifted QR algorithm, discovered in the late 1950's by Francis \cite{francis1961qr,francis1962qr} (see also Kublanovskaya \cite{kublanovskaya1962some}), is and has been for several decades the most widely used
method for approximately\footnote{In the sense of backward error, i.e., exactly computing the eigenvalues of a nearby matrix.} computing all of the eigenvalues of a dense matrix. It is implemented in all of the major software packages for numerical linear algebra and was listed as one of the ``Top 10 algorithms of the twentieth century'' 
\cite{dongarra2000guest, parlett2000qr}. The algorithm is specified by a {\em shifting strategy},
which is an efficiently computable\footnote{In this paper, we assume exact arithmetic with complex numbers and count arithmetic operations as a measure of complexity.} function $$\Sh:\H^{n\times n}\rightarrow \Pol_k,$$ where $\H^{n\times n}$ is the set of $n\times n$ complex Hessenberg\footnote{A matrix $H$ is (upper) Hessenberg  if $H(i,j)=0$ whenever $i>j+1$. Such matrices are ``almost'' upper triangular.} matrices and $\Pol_k$ is the set of monic complex univariate
polynomials of degree $k$, for some $k=k(n)$ typically much smaller than $n$. The word ``shift'' comes from the fact that when $k=1$ we have $p_t(H_t)=H_t-s_t$\footnote{Hereon, following a convention from operator theory, we will use scalars to denote scalar multiples of the identity.} for some $s_t\in \C$. The algorithm then consists of the following discrete-time
isospectral nonlinear dynamical system on $\H^{n\times n}$, given an initial condition $H_0$:
\begin{align}
    Q_tR_t &= p_t(H_t)  \quad&\text{where }p_t=\Sh(H_t),\label{eqn:iterdef}\\
 H_{t+1} &= Q_t^*H_tQ_t,\quad& t=0,1,2,\ldots \nonumber.
\end{align} 
The first step in \eqref{eqn:iterdef} is a $QR$ decomposition, i.e.  $Q_t$ is unitary and $R_t$ is upper triangular. It is  not hard to see that this iteration preserves Hessenberg structure. Moreover, under the specification that the diagonal entries of $R_t$ are positive, the QR decomposition is uniquely determined whenever $p_t(H_t)$ is invertible. Therefore, for the sake of simplicity, in this introduction  we will  ignore the case when $p_t(H_t)$ is singular (see Section \ref{sec:related} for a discussion).

The relevance of this iteration to the eigenvalue problem stems from two facts. First, every matrix $A\in\C^{n\times n}$ is unitarily similar to a Hessenberg matrix $H_0$, and in exact arithmetic such a similarity can be computed exactly in $O(n^3)$ operations. Second, it was shown in \cite{francis1961qr, kublanovskaya1962some} that for the trivial ``unshifted''  strategy $p(z)=z$, in the generic situation when $H_0$ has distinct moduli, the iterates $H_t$  always converge to an upper triangular matrix $H_\infty$; this is because iterates of the unshifted QR algorithm are in one-to-one correspondence with the iterates generated by running simultaneous iteration (a specific form of subspace iteration) for $H_0$ (or equivalently inverse iteration for $H_0^*$) on the canonical orthonormal basis; we refer the reader to  \cite{watkins1982understanding} for details. Combining the unitary similarities accumulated during the iteration, these two facts yield a Schur factorization
$A=Q^*H_\infty Q$
of the original matrix, from which the eigenvalues of $A$ can be read off. The unshifted QR iteration does {\em not} give an efficient algorithm, however, as it is easy to see that convergence can be arbitrarily slow if the ratios of the magnitudes of the eigenvalues of $H_0$ are close to $1$. The role of the shifting strategy is to adaptively improve these ratios and thereby accelerate convergence. The challenge is that this must be done efficiently without prior knowledge of the eigenvalues.
\newcommand{\dec}{\mathsf{dec}}

We quantify the speed of convergence of a sequence of iterates of \eqref{eqn:iterdef} in terms of its {\em $\delta$-decoupling time} $\dec_\delta(H_0)$, which is defined as the smallest $t$ at which some subdiagonal entry of $H_t$ satisfies $$|H_t(i+1,i)|\le \delta\|H_t\|,$$
where henceforth $\|\cdot\|$ will be used to denote the operator norm. In this context, ``rapid'' convergence means that $\dec_\delta(H_0)$ is a very slowly growing function of $n$ and $1/\delta$, ideally logarithmic or polylogarithmic. We will refer to a Hessenberg matrix with some $H(i+1,i)=0$ as {\em decoupled}.

\begin{remark}[Arithmetic Complexity from Decoupling Time] The motivation for the particular measure of convergence above is that there is a procedure called {\em deflation} which zeroes out the smallest subdiagonal entry of a $\delta$-decoupled Hessenberg matrix and obtains a nearby block upper triangular matrix, which allows one to pass to subproblems of smaller size incurring a backward error of $\delta\|H_0\|$. Repeating this procedure $n$ times (and exploiting the special structure of Hessenberg matrices to compute the $Q_t$ efficiently) yields an algorithm for computing a triangular $T$ and unitary $Q$ such that $\|H_0-Q^*TQ\|\le n\delta\|H_0\|$ in a total of  $O(n^3\dec_\delta(H_0))$ arithmetic operations \cite{watkins2007matrix}. Thus, the interesting regime is to take $\delta\ll 1/n$. We provide a full analysis of the deflation step as well as the total complexity of the algorithm in the subsequent paper \cite{banks2021global2}, but focus solely on decoupling in this paper. \end{remark}

In a celebrated work, Wilkinson \cite{wilkinson1968global} proved global convergence\footnote{i.e., from any initial condition $H_0$.} of shifted QR on all {\em real symmetric} tridiagonal\footnote{i.e., arising as the Hessenberg form of symmetric matrices.} matrices  using the shifting strategy that now carries his name. The linear convergence bound $\dec_\delta(H_0)\le O(\log (1/\delta))$ for this shifting strategy was then  obtained by Dekker and Traub \cite{dekker1971shifted} (in the more general setting of Hermitian matrices),  and reproven by Hoffman and Parlett \cite{hoffmann1978new} using different arguments. Other than these results for Hermitian matrices, there is no known bound on the worst-case decoupling time of shifted QR for any large class of matrices or any other shifting strategy. For nonnormal matrices, it is not even known if there is a shifting strategy which yields global convergence regardless of an effective bound on the decoupling time\footnote{A thorough discussion of related work appears in Section \ref{sec:related}.}. Shifted QR is nonetheless the most commonly used algorithm in practice for the nonsymmetric eigenproblem on dense matrices. The strategies implemented in standard software libraries heuristically converge very rapidly on ``typical'' inputs, but occasionally examples of nonconvergence are found \cite{day1996qr,moler2014} and dealt with in ad hoc ways.

Accordingly, the main theoretical
question concerning shifted QR, which has remained open since the 1960s, is:
\begin{qd}
    \label{qd}
    Is there a shifting strategy for which the Hessenberg shifted QR iteration provably and rapidly decouples on nonsymmetric matrices?
\end{qd}
\noindent Question \ref{qd} was asked in various forms e.g. by Parlett \cite{parlett1973normal,parlett1974rayleigh}, Moler \cite{moler1978three, moler2014}, Demmel \cite[Ch. 4]{demmel1997applied},  Higham \cite[IV.10]{higham2015princeton}, and Smale \cite{smale1997complexity} (who referred to it as a ``great challenge'').

 The main result of this article (Theorem \ref{thm:main}) is a positive answer to Question \ref{qd} which is quantified in terms of how nonnormal the input matrix is. To be precise, let $\H_\K^{n\times n}$ be the set of diagonalizable complex Hessenberg matrices $H_0$ with eigenvector condition number $\kappa_V(H_0)\le \K$. We exhibit a two parameter family of deterministic shifting strategies $\Sh_{k, \K}$ indexed by a degree parameter $k=2,4,8\ldots$ and condition parameter $B\ge 1$ and prove that:
\begin{enumerate}
    \item [(i)] The strategy $\Sh_{k, \K}$ satisfies $\dec_\delta(H_0)\le O(\log(1/\delta))$ for every $H_0\in\H_\K^{n\times n}$ and $\delta>0$.
    \item [(ii)] $\Sh_{k,\K}$ has degree $k$ and can be computed in roughly $O((\log k + \K^{\frac{\log k}{k}})k n^2)$ arithmetic operations, which is simply  $O(n^2k\log k)$ for the judicious setting $k=\Omega(\log B\log\log B)$. 
    \end{enumerate}Thus, the computational cost of the shifting strategy required for convergence increases as the eigenvectors of the input matrix become more and more ill-conditioned\footnote{As in many other settings, the condition number of the problem does not only have an effect on the numerical stability, but also on the  complexity of the algorithm. We refer the reader to Section 1.1.1 of \cite{banks2020pseudospectral} for a discussion on the condition number of the eigenvalue problem and its relation to $\kappa_V(H)$.}, but the dependence on the eigenvector condition number is very mild. 

We remark that such a result was not previously known even in the case $\K=1$, which corresponds to normal matrices. Further, as we explain in Remark \ref{remark:regularization}, a tiny random perturbation of any $H_0\in \H^{n\times n}$ is likely to be an element of $\H_\K^{n\times n}$ for small $\K$ (not depending on $H_0$). Thus, while our theorem does not give a single shifting strategy which works for all  matrices, it does give a strategy which  works for a tiny random perturbation of every matrix (with high probability, where ``tiny'' and ``small'' must be quantified appropriately). Consequently, the present work can be interpreted in the following three ways:

\begin{enumerate}
    \item [(A)] {\em An algorithm with guarantees on a restricted set of inputs.}  After fixing the parameter  $\K\geq 1$, our results provide a shifting strategy  which may be run on any input but is only guaranteed to succeed on those inputs $H_0$ satisfying $\kappa_V(H_0) \leq \K$. This may be of use when a priori information about the matrix is given. 
    \item [(B)] {\em An algorithm with guarantees for all inputs.}  One can  adopt a smoothed analysis perspective, in the sense of \cite{spielman2004smoothed}, and further endow the algorithm with a preprocessing step  consisting of randomly perturbing the input. This yields a guarantee (with high probability) of success for all inputs  and has the notable feature that the running time of the algorithm depends very mildly on the (inverse of the) size of the perturbation. However, in cases when the input matrix possesses some nice structure (e.g. sparsity), this a approach has a clear drawback, which points towards investigating the regularization effect of structure-preserving random perturbations. 
    
    \item [(C)] {\em A heuristic for why success occurs ``most of the time".}  One can interpret our results as a quantitative statement about the shifting strategy succeeding for ``almost all'' matrices. Then heuristically (and partially exiting the theoretical outlook of our work)  the rounding errors coming from floating point computations could potentially have a regularization effect on the input, ultimately implying convergence even on those inputs that did not satisfy the bound $\kappa_V(H_0)\leq \K$.

\end{enumerate}

\noindent {\em Motivation and Scope (Theory vs Practice).} Before proceeding to formally state the main results of this paper we would like to comment on the motivation and scope of our work. In this regard, it is important to distinguish theory from practice, and clarify that the present paper does not seek to be an immediate prescription for practitioners: more experimentation and theoretical development would be required to  perfect the existing (extremely sophisticated) libraries for diagonalization, which throughout the years have been fine-tuned to enhance their performance and efficiency, and constitute an engineering feat.

On the other hand, we do believe that our  work greatly advances the theory on the Hessemberg QR algorithm, whose convergence properties (as mentioned above) were mysterious even in the normal case. In particular, this paper provides (in exact arithmetic) a rigorous and conceptual understanding of: (i) the interaction between Ritz values based shifts and exceptional shifts (which was previously understood only in the case of unitary matrices \cite{wang2001convergence,wang2002convergence,wang2003convergence}, and has been exploited in practice in ad hoc ways), and (ii) the impact that ill-conditioned eigenvectors have on the speed of convergence  of the algorithm, together with the relevance of higher degree shifts (which are also used in practice) to nonnormality. The subsequent paper \cite{banks2021global2} further analyzes the numerical instabilities that arise near decoupling when shifted QR is implemented in finite precision arithmetic, as well as a full analysis of the deflation step, a topic that has been only partially addressed in the past \cite{parlett1993forward, watkins1995forward} even in the Hermitian setting.

Finally, we also believe that our work has the potential to eventually have a practical impact. As mentioned above, current implementations of the Hessenberg QR algorithm are a result of decades of improvement, where failure cases have been addressed by adding ad hoc shifts, ultimately leading to a complex algorithm which can still potentially fail to converge. For example,  \cite[pg. 3]{byers2007lapack} states ``As implemented in LAPACK 3.1, the new QR algorithm is substantially
more complicated... [and] has over 1,300 executable lines of code spread among seven subroutines." In contrast, the results here provide a conceptually simple shifting strategy which is infallible in the sense of (A) and (B), but which is unlikely to have comparable efficiency to the shifting strategies used in practice. This leads one to wonder if there is a shifting strategy that gets the best of both worlds. 

\subsection{Statement of Results and Key Ideas}\label{sec:results}
This paper contains two theorems. The first is a warmup to the main theorem, corresponding to the special case $k=2,B=1$, and states that a certain simple shifting strategy is always rapidly convergent on normal matrices in exact arithmetic, where we assume we can compute square roots exactly.
\begin{theorem}[Normal Matrices] \label{thm:mainnormal}
There is a degree $k=2$ deterministic shifting strategy $\Sh_{2,1}$ which ensures that 
    \begin{equation}\label{eqn:normaldectime}
        \dec_\delta(H_0)\le 4\log(1/\delta).
    \end{equation}
    for all normal Hessenberg $H_0$ and all $\delta>0$.
The worst case arithmetic complexity of each iteration of $\Sh_{2,1}$ is at most $$792\cdot T_\iqr(2,n)+O(1)$$
arithmetic operations, where $T_\iqr(2,n)\le 14n^2$ is the arithmetic complexity of an implicit QR step (see Section \ref{sec:prelims}).
\end{theorem}

\noindent {\em Key Ideas.} The proof of Theorem \ref{thm:mainnormal} appears in Section \ref{sec:normal}. The main challenge in proving such a theorem is that all known shifting strategies based on Ritz values have attractive fixed points which are not decoupled (e.g., certain unitary matrices), and the shifted QR iteration can stagnate (converge slowly) in the neighborhood of such matrices and possibly others. Our main conceptual idea is to carefully design a shifting strategy in such a way that {\em all of its stagnant states are well-characterized}.\footnote{The earliest precursor to such a result is Parlett \cite{parlett1966singular}, who showed that all of the { fixed points} (which can be thought of as perfectly stagnant states) of a certain degree $2$ shifting strategy are translations of unitary matrices.} Given our characterization, the shifting strategy then uses another mechanism (certain ``exceptional shifts'', see Section \ref{sec:related} for a discussion) to guarantee rapid convergence whenever stagnation occurs. At a technical level, our key innovation is to analyze convergence in terms of certain spectral measures associated with the iterates $H_t$, which enables our characterization. Our analytic approach  differs significantly from previous essentially algebraic  (e.g. \cite{wilkinson1968global, dekker1971shifted, hoffmann1978new, wang2001convergence}) and geometric (e.g. \cite{batterson1989dynamics, batterson1990convergence, batterson1990rayleigh, leite2013dynamics}) approaches to analyzing the shifted QR algorithm.

\begin{remark}[Optimizing the constants in Theorem \ref{thm:mainnormal}] 
\label{rem:optimizingmainnormal}
Our goal in presenting Theorem \ref{thm:mainnormal} is to elucidate one of the key mechanisms in our analysis by analyzing the simplest instantiation of our shifting strategy, prioritizing clarity of exposition over optimality. Later, in Lemma \ref{lem:exc} (which applies in a more general setting) we demonstrate that in the normal case the related strategy $\Sh_{4, 1}$ achieves \ref{eqn:normaldectime} using at most $54$ calls to a degree $4$ implicit QR step in each iteration, instead of the $792$ above. Furthermore, as discussed in Remark \ref{rem:optimizingmaintheoremgamma}, a more meticulous analysis in the normal case can reduce the constant even further, significantly decreasing it from $54$ to a smaller number.
\end{remark}

\begin{remark}[Maintaining Similarities] All arithmetic complexity bounds in this paper refer only to task of computing the Hessenberg iterates $H_t$, without keeping track of the accumulated unitary similarities $Q_0Q_1\ldots Q_t$ between $H_t$ and $H_0$. The additional task of computing the similarities is well-understood (e.g. see \cite{watkins2008qr}), and if desired can be achieved with a small increase in the total running time, without changing the asymptotic complexity. We focus on the convergence of the Hessenberg iterates $H_t$ and do not discuss maintaining the similarities further in this paper.
\end{remark}
Our second (and main) theorem generalizes Theorem \ref{thm:mainnormal} to nonnormal matrices. 
We need the following  notion to precisely state it. \\

\noindent {\em $\theta$-Optimal Ritz Values.} Like most previously studied shifting strategies, we define $\Sh_{k, \K}(H_t)$ in terms of  {\em Ritz values} of the current iterate $H_t$. In this paper, when working with a Hessenberg matrix, the term Ritz values will be exclusively used to refer to the eigenvalues of its bottom right $k\times k$ corner $\corner{(H)}{k}$;\footnote{In general, for a matrix $A$ and a tall matrix with orthonormal columns $Q\in \bC^{n\times k}$, the eigenvalues of $Q^*AQ$ are referred to as Ritz values. Here we are only interested in the case when $H$ is Hessenberg and $Q^*=[I_k, 0_{n-k} ]$.} they are also characterized variationally as the roots of the degree $k$ monic polynomial $p$ minimizing $\|e_n^*p(H)\|$, where $e_n$ is the last standard basis vector (see Lemma \ref{lem:minnorm} for details). Since computing eigenvalues of arbitrary matrices exactly is impossible, we assume access to a method for computing approximate Ritz values, in the sense encapsulated in the following definition.

\begin{definition}[$\r$-Optimal Ritz values and Ritz value finders] \label{def:approxritz}
Let $\theta\ge 1$. We call $\calR = \{r_1,\ldots,r_k\}\subset \C$ a set of $\r$-\emph{optimal Ritz values} of a Hessenberg matrix $H$ if
\begin{equation}
    \label{eqn:aritz}
    \left\|e_n^* \prod_{i\le k}(H-r_i)\right\|^{1/k}\le \r  \min_{p\in\calP_k} \|e_n^*p(H)\|^{1/k}.
\end{equation}
A {\em Ritz value finder} is an algorithm
$\apr(H, k, \theta )$ that takes as inputs a Hessenberg matrix $H\in \bC^{n\times n}$, a positive integer $k$ and an accuracy parameter $\theta>1$, and outputs a set $\calR = \{r_1, \dots, r_k\}$ of $\r$-optimal Ritz values of $H$ whenever the right hand side of $\eqref{eqn:aritz}$ is nonzero. Let $\tapr(k, \theta, \delta)$ be the maximum number of arithmetic operations used by $\apr(H,k,\theta)$ over all inputs $H$ such that the right hand side of $\eqref{eqn:aritz}$  satisfies\footnote{Such a lower bound is needed, since otherwise we could use $\apr$ to compute the eigenvalues of $\corner{H}{k}$ to arbitrary accuracy in finite time.} $$\min_{p\in\calP_k} \|e_n^*p(H)\|^{1/k}\ge \delta\|H\|.$$
\end{definition}

A Ritz value finder satisfying Definition \ref{def:approxritz} can be efficiently instantiated using polynomial root finders (e.g. \cite{pan2002univariate}) or other provable eigenvalue computation algorithms (e.g. \cite{banks2020pseudospectral, banks2021global3}) with guarantees of type $\tapr(\theta,k,\delta)=O(k^c\log(\tfrac{1}{\delta(\theta - 1)}))$. We defer a detailed discussion of numerical issues surrounding this implementation to our companion papers \cite{banks2021global2,banks2021global3}. The subtlety of not being able to compute Ritz values exactly is secondary to the dynamical phenomena which are the focus of this paper, so on first reading of the proofs it is recommended to assume $\theta=1$ (i.e., Ritz values are computed exactly), even though this is unrealistic when $k > 1$. The theorem below is stated with $\r = 2$, which is also the parameter setting used in \cite{banks2021global2}.

We now present our main theorem. All logarithms are base $2$. 
\begin{theorem}[Nonnormal Matrices]
\label{thm:main} There is a family of deterministic shifting strategies  $\Sh_{k, \K}$ (described in Section \ref{sec:mainresult}) parameterized by degree $k=2,4,8,\ldots$  and nonnormality bound $B\ge 1$ with the following properties.
\begin{enumerate}
    \item (Rapid Decoupling) If $H_0\in\H^{n\times n}_B$ , then for every $\delta>0$, the QR iteration with strategy $\Sh_{k, \K}$ satisfies 
    \begin{equation}\label{eqn:maindeflate}
        \dec_\delta(H_0)\le 4\log(1/\delta).
    \end{equation}
    \item (Cost Per Iteration Before Decoupling) Given a Ritz value finder $\apr(H,k,\theta)$ with complexity $\tapr(k,\theta, \delta)$, an accuracy parameter $\delta>0$, and a Hessenberg matrix $H_t\in \H^{n\times n}_B$, computing $H_{t+1}$ given $H_t$ has a cost per iteration of at most 
    \begin{equation}
        \label{eq:main-arith-ops}
         \left( \log k  +   \tnet\left(0.002\, \K^{-\frac{8\log k + 4}{k-1}}\right)\right)\cdot T_{\iqr}(k,n) + \tapr(k, 2, \delta) + \log k
    \end{equation}
    arithmetic operations for all iterations before \eqref{eqn:maindeflate} is satisfied, where $\tnet(\epsilon)\le 4/\epsilon^{2}$ is number of points in an efficiently computable $\epsilon$-net of the unit disk and $T_{\iqr}(k,n)\le 7kn^2$ is an upper bound on the arithmetic cost of a degree $k$ implicit QR step (see Section \ref{sec:prelims}). 
\end{enumerate}
\end{theorem}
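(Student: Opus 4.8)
The plan is to control a potential function built from the quantity $\psi_k(H_t) := \min_{p\in\Pol_k}\|e_n^*p(H_t)\|$ that appears in the variational characterization of Ritz values (Lemma~\ref{lem:minnorm}), and to show it contracts geometrically under $\Sh_{k,\K}$. Normalize $\|H_0\| = 1$, so $\|H_t\| = 1$ for all $t$ by unitary similarity. Because $H_t$ is Hessenberg, only the leading term of a monic degree-$k$ polynomial reaches the $(n-k)$-th coordinate of $e_n^*p(H_t)$, and the remaining coordinates can be cancelled exactly by a triangular choice of the lower-degree coefficients; hence $\psi_k(H_t)$ is precisely the product of the magnitudes of the last $k$ subdiagonal entries of $H_t$, and $\psi_k(H_t)^{1/k}$ is their geometric mean. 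In particular $\psi_k(H_0)^{1/k}\le 1$, and the first time $\psi_k(H_t)^{1/k}\le\delta$ some subdiagonal entry is at most $\delta\|H_t\|$, so $\dec_\delta(H_0)$ has been reached. Part~(1) thus reduces to showing $\psi_k(H_t)^{1/k}$ drops below $\delta$ within $4\log(1/\delta)$ steps, which I would get from a per-step geometric contraction of $\psi_k(H_t)^{1/k}$ (allowing a bounded interruption by non-contracting ``exceptional'' steps), the constant $4$ emerging from tracking the contraction factor and the $\r = 2$ slack in the Ritz finder; the degenerate case where $p_t(H_t)$ is singular --- i.e.\ $p_t$ vanishes at an eigenvalue of $H_t$ --- is handled by a small deliberate perturbation of the shift, which only helps.

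For the one-step estimate I would start from the identity $p_t(H_{t+1}) = R_t Q_t$ (obtained from $Q_t R_t = p_t(H_t)$ and $H_{t+1} = Q_t^*H_tQ_t$), which gives $\psi_k(H_{t+1})\le\|e_n^*p_t(H_{t+1})\| = |R_t(n,n)|$, and then express $|R_t(n,n)|$ in terms of $\psi_k(H_t)$, the values of $p_t$ on the spectrum of $H_0$, and the angle between the trailing $k$-dimensional left-invariant subspace of $H_t$ and the corresponding subspace of $H_0$. Morally, $t$ steps perform degree-$tk$ subspace iteration with cumulative polynomial $\prod_{s<t}p_s$, and $\psi_k(H_t)^{1/k}$ is governed by the ratio between $\prod_s|p_s(\cdot)|$ on the ``$k$ trailing'' eigenvalues and on the rest --- up to a factor $\K = \kappa_V(H_0)$ incurred by comparing subspace angles in the eigenbasis of $H_0$ with angles in the standard basis. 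The first key step is that a single degree-$k$ shift, with roots placed near the trailing Ritz values, already suppresses this ratio by a $\mathrm{poly}(\K)$ factor; since the $\r$-optimal Ritz values of Definition~\ref{def:approxritz} are provably good proxies for the trailing eigenvalues, this is the promised ``damping'' of transient nonnormality, and it is why the \emph{cost} (rather than the rate) must grow with $\K$: the exponent of $\K$ in the precision one needs shrinks like $1/k$.

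The obstacle is that placing the shift roots at the optimal Ritz values does not always contract. I would characterize the bad cases --- the ``stagnation set'' --- by a symmetry between the Ritz values and the trailing spectrum which forces the governing ratio above to equal $1$, so a trial Ritz step makes no progress. The strategy $\Sh_{k,\K}$ therefore must (i) \emph{detect} stagnation, by taking a trial implicit QR step and comparing $\psi_k(H_{t+1})$ with $\psi_k(H_t)$ (or via an equivalent algebraic test on the corner $\corner{(H_t)}{k}$), and (ii) when stagnation is flagged, \emph{break the symmetry} by replacing the shift roots with a perturbed set drawn from a fixed $\epsilon$-net of the unit disk. The crux is a quantitative escape lemma: the stagnation set is essentially cut out by a single symmetry equation whose influence is amplified by at most $\mathrm{poly}(\K)$ through nonnormality, so some net point at scale $\epsilon\approx\K^{-(8\log k + 4)/(k-1)}$ yields a definite contraction --- the $1/(k-1)$ in the exponent being, once again, the damping effect. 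I expect this escape lemma --- identifying exactly which symmetric configurations stagnate and showing how robustly a generic perturbation destroys them in the presence of nonnormality $\K$ --- to be the main difficulty; the rest is potential-function bookkeeping.

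For Part~(2) I would account for the work of a single iteration of $\Sh_{k,\K}$. Before decoupling, $\psi_k(H_t)^{1/k} = \min_{p\in\Pol_k}\|e_n^*p(H_t)\|^{1/k} > \delta\|H_t\|$ (else $\dec_\delta$ would already hold), which is exactly the precondition of the Ritz value finder, so $\apr(H_t, k, 2)$ is valid and costs $\tapr(k, 2, \delta)$; forming the degree-$k$ shift polynomial from its roots and the associated bookkeeping contributes an additive $O(\log k)$. Detecting stagnation and, if necessary, scanning the net costs one degree-$k$ implicit QR step --- at most $T_{\iqr}(k,n)\le 7kn^2$ --- per candidate, over $\tnet(\epsilon) = O(\epsilon^{-2})$ candidates with $\epsilon = 0.002\,\K^{-(8\log k + 4)/(k-1)}$, together with an $O(\log k)$ multiplicative overhead on $T_{\iqr}(k,n)$ from a logarithmic number of further probing steps (e.g.\ over a doubling sequence of candidate degrees, plausibly the $\try$ subroutine). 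Summing these contributions gives exactly expression~\eqref{eq:main-arith-ops}; and substituting the judicious choice $k = \Omega(\log\K\,\log\log\K)$ makes $\K^{(8\log k + 4)/(k-1)} = O(1)$, hence $\tnet(\epsilon) = O(1)$, collapsing the cost to $O(kn^2\log k) + \tapr(k, 2, \delta)$. The only nonroutine input here is the value of $\epsilon$, which is forced by the quantitative escape lemma of Part~(1).
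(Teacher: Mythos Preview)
Your proposal has the right architecture --- a potential equal to the geometric mean of the last $k$ subdiagonals, a trial shift from the Ritz values, a stagnation test via the potential, and a fallback to exceptional shifts drawn from an $\epsilon$-net --- and your cost accounting skeleton is roughly correct. But the technical engine you propose for the one-step contraction is different from the paper's and, as written, does not close.

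The paper does \emph{not} analyze the iteration via subspace angles, cumulative polynomials, or ratios of $\prod_s|p_s(\cdot)|$ over a ``trailing'' eigenvalue cluster. There is no well-defined trailing cluster in general, and such an analysis would drag in eigenvalue gaps that are not controlled. Instead, the paper introduces an \emph{approximate functional calculus}: to each Hessenberg $H$ one associates a random variable $Z_H$ on $\mathrm{Spec}(H)$ with law $\P[Z_H=\lambda_i]\propto |(e_n^*V)_i|^2$, so that $\kappa_V(H)^{-1}\|e_n^*f(H)\|\le (\E|f(Z_H)|^2)^{1/2}\le \kappa_V(H)\|e_n^*f(H)\|$ for every $f$. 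All of the analysis is then done with moments of $Z_H$, and the $\kappa_V$ losses are raised to the $1/k$ power, which is why degree $k\gg\log\K$ suffices.

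Two further points where your sketch diverges from the actual strategy. First, the shift is not the Francis polynomial $\prod_i(z-r_i)$ but $(z-r)^k$ for a \emph{single} ``$\alpha$-promising'' Ritz value $r$, defined by the moment inequality $\E|Z_H-r|^{-k}\ge \alpha^{-k}\E|p(Z_H)|^{-1}$; such an $r$ exists by AM/GM and is located by a binary search over $\calR$ (the $\find$ subroutine). This binary search is the source of the $\log k$ multiplying $T_{\iqr}(k,n)$ --- not a doubling over candidate degrees as you guessed. Second, the ``escape lemma'' is not an algebraic symmetry statement. The paper shows (Lemma~\ref{lem:main}) that if $(z-r)^k$ stagnates then a Paley--Zygmund argument applied to $|Z_H-r|^{-k}$ forces $Z_H$ to place mass at least $(1-t)^2\gamma^{2k}/(\alpha\theta)^{2k}\kappa_V^{-4}$ on a disk of radius $\approx\alpha\theta\psi_k(H)$ about $r$; an $\epsilon$-net of that disk must then contain a point $s$ with $\E|Z_H-s|^{-2k}$ large enough to force contraction. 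The value $\epsilon\approx \K^{-(8\log k+4)/(k-1)}$ falls out of this computation with $\alpha=\K^{4\log k/k}$, $\theta=2$, $\gamma=0.8$.

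In short: your high-level plan is right, but the subspace-angle route you outline does not furnish the quantitative stagnation $\Rightarrow$ support-on-a-disk statement that drives the exceptional shift, and your description of what $\Sh_{k,\K}$ actually does (which shift polynomial, where the $\log k$ comes from) is off. The missing ingredients are the $Z_H$ calculus, the promising-Ritz-value selection, and the Paley--Zygmund step.
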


The term involving $\tnet$ captures the the cost of performing certain ``exceptional shifts'' (see Section \ref{sec:related}) used in the strategy. The tradeoff between the nonnormality of the input matrix and the efficiency of the shifting strategy appears in the cost of the exceptional shift, where it is seen that setting
\begin{equation}\label{eqn:kappasetting}
    k=\Omega(\log \K\log\log \K)
\end{equation}
yields $\K^{-\frac{8\log k + 4}{k-1}}=\Omega(1)$ and a consequent total running time of $O(n^2k\log k)$ operations per iteration. Note that the bound  $\K\ge \kappa_V(H_0)$ must be known in advance in order to determine how large a $k$ is needed to make the cost of the exceptional shift small. One may also take $k$ to be a constant independent of $\K$, but this causes the arithmetic complexity of each iteration to depend polynomially on $\K$ rather than logarithmically. Note that for normal matrices one may take $k=2$ and $\K=1$.\\

\begin{remark}[Higher Degree Shifts]\label{rem:highershifts} In exact arithmetic, a QR step with a degree $k$ shift $p(z)=(z-r_1)\ldots(z-r_k)$ is identical to a sequence of $k$ steps with degree $1$ shifts $(z-r_1),(z-r_2),\ldots,(z-r_k)$  (see e.g. \cite{watkins2007matrix} for a proof), so any degree $k$ strategy can be simulated by a degree $1$ strategy while increasing the iteration count by a factor of $k$.\footnote{This also has some important  advantages with regards to numerical stability, which are discussed in \cite{banks2021global2}.} We choose to present our strategy as higher degree for conceptual clarity. The overall performance  of shifting strategies of degrees as high as $k=180$ has been tested in the past \cite[Section 3]{braman2002multishift} and  $k=50$ is often used in practice \cite{KressnerQuote}. 
\end{remark}

\noindent {\em Key Ideas.} The proof of Theorem \ref{thm:main} appears in Section \ref{sec:mainresult}. The main new difficulty in the nonnormal case is that  the iterates $H_t$ can behave chaotically on short time scales,\footnote{ We measure time not as the number of QR steps, but as the number of QR steps of degree $1$, so for example a QR step with a degree $k$ shift corresponds to $k$ time steps.} lacking any kind of obvious  algebraic or geometric monotonicity properties (which are present in the normal case, see e.g. \cite{parlett1974rayleigh,batterson1994convergence}). This lack of monotone quantities makes it hard to reason about convergence, as noted by Parlett \cite{parlett1974rayleigh}. For example, consider the family of $n\times n$ matrices: 
\begin{align}\label{eqn:exampleB}
    M = \begin{pmatrix} 
         & & & &  \beta_n \\
        \beta_1 & & & &  \\
        & \beta_2 & & &  \\
        & &  \ddots &  & \\
        & & &  \beta_{n-1} &
    \end{pmatrix}
    \end{align} 
where $\beta_1,\ldots,\beta_n\in (0,1)$. Observe that, for $k\leq n-1$, the characteristic polynomial of the bottom right corner $\corner{M}{k}$ is just $z^k$, so any na\"ive shifting strategy based on Ritz values will yield the trivial shift. One can verify that a QR step with the trivial strategy applied to $M$  cyclically permutes the $\beta_i$, while leaving the zero pattern of $M$ intact. This means that for adversarially chosen $\beta_1,...,\beta_n$, the bottom few subdiagonal entries of $M$ --- the traditional place to look for monotonicity in order to prove convergence (see e.g. \cite{hoffmann1978new}) --- exhibit arbitrary behavior over a small number of $QR$ steps. At very long time scales of $n$ steps, the behavior becomes periodic and predictable, but there is still no convergence. 

We surmount the above difficulty by using {\em higher degree shifts}. The key insight is that if there is an upperbound on $\kappa_V(H_t)$, then the behavior of a degree $k=\log\kappa_V(H_t)$ QR step is quite predictable, and can be analyzed similarly to the normal case --- essentially, this corresponds to $k$ degree $1$ steps which is enough to ``damp'' the transient behavior due to nonnormality (this is articulated precisely in Section \ref{sec:mainresult}). To see this phenomenon in action, if we impose a bound on $\kappa_V(M)$ in Example \eqref{eqn:exampleB}, it can be seen that the ratios of the $\beta_i$ cannot be arbitrary and the geometric mean of the bottom $k=\log\kappa_V(M)$ subdiagonal entries of $M$ must remain almost-constant on time intervals of $k$ unshifted $QR$ steps. 

At a technical level, the main obstacle is that unlike normal matrices, nonnormal matrices do not have spectral measures or a corresponding continuous functional calculus (which was used crucially in our characterization of stagnation in the proof of Theorem \ref{thm:mainnormal}). A key ingredient in the proof of Theorem \ref{thm:main} is a notion of ``approximate functional calculus'' which allows us to recover enough analytic structure in the nonnormal case to execute the same proof strategy as in the normal case.

\begin{remark}[Regularization of $\kappa_V$ by Random Perturbation]
\label{remark:regularization}
The \emph{pseudospectral regularization} guarantees from \cite{banks2021gaussian} (verifying the conjecture of \cite{davies2008approximate}) imply that every matrix $A$ is $\acc\|A\|$-close in operator norm to matrices with $\kappa_V=O(n^2/\acc)$. Such a nearby, well-conditioned matrix can be produced (with high probability) by perturbing each entry of $A$ with an independent complex Gaussian of variance $\delta$\footnote{See also \cite{banks2020overlaps, jain2020real} for more general perturbations, including real perturbations, which have a similar effect and \cite{armentano2018stable} for similar results in the complex Gaussian case. }. After perturbing we can thus (with high probability) take $\K=O(n^2/\acc)$ in Theorem \ref{thm:main} and set  $k = O(\log(n/\acc)\log\log(n/\acc))$, incurring a backward error of $\delta$ and yielding a per iteration arithmetic cost of $O(n^2\log(n/\delta)\log\log^2(n/\delta))$ before $\delta$-decoupling. This approach embraces the fact that the shifted QR algorithm can only in the first place guarantee backward accuracy of the eigenvalues it computes, so there is no harm in using an initial small random perturbation as a ``preconditioning'' step.  \end{remark}

\begin{remark}[Numerical Stability,  Deflation, and Bit Complexity]
The shifting strategy in Theorem \ref{thm:main} can be implemented in floating point arithmetic using $O(k\log(n/\delta))$ bits of precision for the implicit QR steps\footnote{Hence, when a random perturbation is used as a preconditioner,  in view of Remark \ref{remark:regularization} the number of bits of precision required becomes $O( \log^2(n/\delta)\log\log(n/\delta))$.} and $O(k^2\log^2(n/\delta))$ bits of precision for the Ritz value finder\footnote{The Ritz value finder is invoked only on small $k\times k$ matrices and incurs a subdominant cost.}, while preserving both correctness and rapid convergence, with the caveat that the numerical implementation requires using randomization in order to be efficient. 
This is proved in the companion papers \cite{banks2021global2, banks2021global3}, along with a detailed analysis of deflation, yielding a complete algorithm for computing the eigenvalues of a matrix with good bit complexity estimates. 
\end{remark}

When succeeding, the QR iteration computes the Schur factorization of the input matrix $A$, which consists of a triangular matrix (whose diagonal entries are the eigenvalues of $A$), and a unitary matrix (from where the eigenvectors of $A$ can be recovered). The computation of the unitary part is optional, and requires keeping track of the unitary conjugations used throughout the iteration, a task that incurs a higher cost in the running time (e.g. see \cite{watkins2008qr}). In this paper (and the subsequent works \cite{banks2021global2, banks2021global3}) we will focus on the computation of the triangular part (which is the part of the algorithm that was not yet understood), and the running times appearing in the results will refer to exclusively this task. We note however, that a running time for obtaining the full Schur form (i.e. computing the unitary part too), can be obtained directly from our results and will have the same asymptotic running time, just with larger constants.

\subsection{History and Related Work}\label{sec:related}
The literature on shifted QR is vast, so we  mention only the most relevant works --- in particular, we omit the large body of experimental work and do not discuss the many works on local convergence of shifted QR  (i.e., starting from an $H_0$ which is already very close to decoupling). The reader is directed to the excellent surveys \cite{batterson1995dynamical, smale1997complexity, chu2008linear} or \cite{parlett2000qr, watkins2008qr, golub2009qr} for a dynamical or numerical viewpoint, respectively, or to the books \cite{golub1996matrix,trefethen1997numerical,demmel1997applied,watkins2007matrix} for a comprehensive treatment. A detailed historical summary appears in \cite{golub2009qr}.

Most of the shifting strategies studied in the literature are a combination of the following three types. The motivation for considering shifts depending on $\corner{H}{k}$ is closely related to Krylov subspace methods, see e.g. \cite{watkins2007matrix}. Below $H$ denotes the current Hessenberg iterate.
\begin{enumerate}
    \item \textit{$k$-Francis Shift.} Take $p(z)=\det(z-\corner{H}{k})$ for some $k$. The case $k=1$ is called Rayleigh shift.
    \item \textit{Wilkinson Shift.} Take $p(z) =(z-a)$ where $a$ is the root of $\det(z-\corner{H}{2})$ closer to $\corner{H}{1}$.
    \item \textit{Exceptional Shift.} Let $p(z)=(z-x)$ for some $x$ chosen randomly or arbitrarily, perhaps with a specified magnitude (e.g. $|x|=1$ for unitary matrices in \cite{eberlein1975global,wang2001convergence,wang2002convergence,wang2003convergence}).
\end{enumerate}
Shifting strategies which combine more than one of these through some kind of case analysis are called ``mixed'' strategies. \\

\noindent {\em Symmetric Matrices.} Jiang \cite{erxiong1992note} showed that the geometric mean of the bottom $k$ subdiagonal entries is monotone for the $k$-Francis strategy in the case of symmetric tridiagonal matrices. Aishima et al. \cite{aishima2012wilkinson} showed that this monotonicity continues to hold for a ``Wilkinson-like'' shift which chooses $k-1$ out of $k$ Ritz values. Both of these results yield global convergence on symmetric tridiagonal matrices (without an effective bound on the number of iterations).\\

\noindent{\em Rayleigh Quotient Iteration and Normal Matrices.} The behavior of shifted QR is well known to be related to shifted inverse iteration (see e.g. \cite{trefethen1997numerical}). In particular, the Rayleigh shifting strategy corresponds to a vector iteration process known as Rayleigh Quotient Iteration (RQI). Parlett \cite{parlett1974rayleigh} (building on \cite{ostrowski1957convergence, buurema1958geometric, parlett1968convergence}) showed that RQI converges globally (but without an effective bound) on almost every normal matrix and investigated how to generalize this to the nonnormal case.

Batterson \cite{batterson1990convergence} studied the convergence of $2$-Francis shifted QR on $3\times 3$ normal matrices with a certain exceptional shift and showed that it always converges. The subsequent work \cite{batterson1994convergence} showed that $2$-Francis shifted QR
converges globally on almost every real $n\times n$ normal matrix (without an effective bound).  In Theorem 6 of that paper, it was shown that the same potential that
we consider is monotone-decreasing when the $k$-Francis shift is run on normal
matrices, which was an inspiration for our proof of almost-monotonicty for
nonnormal matrices.\\

\noindent {\em Nonnormal Matrices.} Parlett \cite{parlett1966singular} showed that an unshifted QR step applied to a singular matrix leads to immediate \mbox{$0$-decoupling}, taking care of the singularity issue that was glossed over in the introduction, and further proved that  all of the fixed points of an extension of the $2$-Francis shifted QR step (for general matrices) are multiples of unitary matrices. 

In a sequence of works, Batterson and coauthors investigated the behavior of RQI and $2$-Francis on nonnormal matrices from a dynamical systems perspective. Batterson and Smillie \cite{batterson1989dynamics,batterson1990rayleigh} showed that there are real matrices such that RQI fails to converge for an open set of real starting vectors. The latter paper also established that RQI exhibits chaotic behavior on some instances, in the sense of having periodic points of infinitely many periods.  Batterson and Day \cite{batterson1992linear} showed that $2$-Francis shifted QR converges globally and linearly on a certain conjugacy class of $4\times 4$ Hessenberg matrices.

In the realm of periodicity and symmetry breaking, Day \cite{day1996qr}, building on an example of Demmel, showed that there is an open set of $4\times 4$ matrices on which certain mixed shifting strategies used in the EISPACK library  fail to converge rapidly in exact arithmetic; such an example was independently discovered by Moler \cite{moler2014} who described its behavior in finite precision arithmetic. These examples are almost normal in the sense that they satisfy $\kappa_V\le 2$, so the reason for nonconvergence is symmetry, and our strategy $\Sh_{k,B}$ with modest parameters $k=B=2$ is guaranteed to converge rapidly on them (in exact arithmetic).

Using topological considerations, Leite et al. \cite{leite2013dynamics} proved that no continuous shifting strategy can decouple on every symmetric matrix. Accordingly (in retrospect), the most successful shifting strategy for symmetric matrices, the Wilkinson Shift, is discontinuous in the entries of the matrix and explicitly breaks symmetry when it occurs. Our strategy $\Sh_{k, \K}$ is also discontinuous in the entries of the matrix.\\

\noindent {\em Mixed and Exceptional Shifts.} Eberlein and Huang \cite{eberlein1975global} showed global convergence (without an effective bound) of a certain mixed strategy for unitary Hessenberg matrices; more recently, the works \cite{wang2001convergence,wang2002convergence,wang2003convergence} exhibited mixed strategies which converge globally and linearly for unitary Hessenberg matrices with a bound on the rate\footnote{We refer to the constant multiplying the error in each iteration of linear convergence as its {\em rate}.}, but this bound depends on the matrix in a complicated way and is not clearly bounded away from $1$.  Our strategy $\Sh_{k, \K}$ is also a mixed strategy which in a sense combines all three types above. Our choice of exceptional shift was in particular inspired by the work of \cite{eberlein1975global,wang2002convergence} --- the difference is that the size of the exceptional shift is naturally of order $1$ in the unitary case, but in the general case it must be chosen carefully at the correct spectral scale. \\

\noindent {\em Higher Degree Shifts.} The idea of using higher degree shifts was already present in \cite{francis1961qr, dekker1971shifted}, but was popularized in by Bai and Demmel in \cite{bai1989block}, who observed that higher order shifts can sometimes be implemented more efficiently than a sequence of lower order ones; see \cite[Section 3]{bai1989block} for a discussion of various higher order shifting strategies which were considered in the 1980s. More modern approaches use multishifts \cite{braman2002multishift} in combination with other techniques such as aggressive early deflation \cite{braman2002multishiftII}. 

The use of higher degree shifts in this paper is purely in order to tame the effects of nonnormality, and {\em not} to improve efficiency as in the aforementioned previous works. As explained in  Remark \ref{rem:highershifts}, our algorithms can be equivalently described using sequences of single shifts. \\


We defer a detailed discussion of the extensive related work on numerical issues related to shifted QR as well as a comparison to other algorithms for computing eigenvalues (in particular, \cite{armentano2018stable} and \cite{banks2020pseudospectral}) to our companion paper \cite{banks2021global2}.\\

\section{Preliminaries and Notation}
\label{sec:prelims}
As mentioned above, the  eigenvector condition number of a  diagonalizable matrix $M$ is defined as 
$$ 
    \kappa_V(M) := \inf_{V:M=VDV^{-1}} \|V\|\|V^{-1}\|.
$$ 
We note that, since $\|V\|\|V^{-1}\|$ is invariant under multiplying $V$ by a scalar, the infimum is not changed when restricting to the compact set of eigenvector matrices with $\|V\|\leq 1$, and therefore the infimum is always attained by some matrix $V_0$. Moreover, by van der Sluis theorem (see \cite[Theorem 7.5]{higham2002accuracy}), the condition number of $V_0$ differs at most by a factor of $\sqrt{n}$ from the eigenvector matrix with unit columns.  

Throughout the remainder of the paper, $H = (h_{i,j})_{i,j \in [n]}$ will denote an $n\times n$ upper Hessenberg matrix. As in the introduction, we use
$$\corner{H}{k}\quad \text{and} \quad \chi_k(z)$$
to denote the lower-right $k\times k$ corner of $H$ and its characteristic polynomial respectively. Following the convention in operator theory, we will write scalar multiples of the identity $zI$ with $z\in \C$ as $z$, e.g., we will write $z-H$ instead of $zI-H$. As mentioned in the introduction, $\calP_k$ will always denote the set of monic polynomials of degree $k$.  All matrix norms are operator norms, denoted by $\|\cdot\|$.\\

\noindent {\em Probabililty.} We use standard probabilistic notation: $\P[\cdot]$ denotes the probability of an event and $\E[\cdot]$ denotes the expectation of a random variable. All probabilities and expectations in this paper are with respect to the random variable ``$Z_H$'', defined by \eqref{eqn:normrand} in Section \ref{sec:normal} and more generally by \eqref{eqn:funccalcrand} in Section \ref{sec:mainresult}; the random variable in any probabilistic statement will therefore always be unambiguous from the context. The only probabilistic facts we use are linearity of expectation, Jensen's inequality, and the Paley-Zygmund inequality.
\\

\noindent {\em Implicit QR.} We assume black box access to a routine (the implicit QR algorithm) for efficiently performing a QR step in $O(kn^2)$ arithmetic operations (rather than the $O(kn^3)$ operations required by a naive method). Since numerical stability issues are not discussed in this paper, this is the only property that we will use of the implicit QR algorithm.
\begin{definition}[Implicit QR Algorithm]
\label{def:stableiqr}
For $k\leq n$, an exact implicit QR algorithm $\exactqr(H, p(z))$ takes as inputs an irreducible\footnote{A Hessenberg matrix $H$ is said to be irreducible if all of its subdiagonal entries are non-zero.} Hessenberg matrix $H \in \bC^{n\times n}$ and a polynomial $p(z)=(z-s_1)\cdots (z-s_k)$ and outputs a Hessenberg matrix $\next H$ satisfying 
$$\next H = Q^* H Q,$$
where $Q$ is a unitary matrix such that $p(H) = QR$ for some upper triangular matrix $R$, as well as the number $\|e_n^\ast p^{-1}(H)\|$ (which appears as the bottom right entry of $R$) whenever $p(H)$ is invertible.  It runs in at most 
\begin{equation}
    T_{\iqr}(k,n)\le 7kn^2
\end{equation} operations.
\end{definition}

Note that in the above definition we have assumed that the input to $\exactqr$ is an \emph{irreducible} Hessenberg matrix. This will not be a problem throughout the paper in the analysis of the algorithm, since the ultimate goal is to prove an upper bound on the number arithmetic operations needed to achieve $\delta$-decoupling, and by definition \emph{reducible} matrices are $\delta$-decoupled for any $\delta>0$.   We refer the reader to  \cite[Section 3]{watkins2008qr} for a proof in exact arithmetic of the existence of an efficient implicit QR algorithm.\\

\noindent {\em Potential $\psi_k$.} We will use the geometric mean of the last $k$ subdiagonal entries of the $H$ to track convergence of the Shifted QR iteration, since we are guaranteed $\delta$-decoupling once this quantity is smaller than $\delta\|H\|$. More explicitly, we introduce the following definition. 

\begin{definition}[Potential function $\psi_k(H)$]
    The \textit{potential} \footnote{We remark in passing that $\log\psi_k(H)$ is an upper bound on a certain logarithmic potential associated with the Ritz values in the sense of potential theory, though we do not use this relation in this paper (the interested reader can consult \cite{saff2013logarithmic}). The term ``potential" here is alluding to the notion of ``potential function" often used in computer science as a tool to track the progress of an algorithm. } 
$\psi_k(H)$ of $H$ to be
\begin{equation}\label{eqn:potdef}
    \pot(H) := |h_{n-k, n-k-1}\cdots h_{n, n-1}|^{\frac{1}{k}}.
\end{equation}
\end{definition}

\label{sec:basiclemmas}
We record two useful lemmas relating the potential $\pot(H)$, the Hessenberg structure of $H$, and their evolution under the shifted QR iteration. The first gives a variational characterization of the potential (see \cite[Theorem 34.1]{trefethen1997numerical}). 

\begin{lemma}[Variational Formula for $\pot$]
    \label{lem:minnorm}
    Let $H\in \bC^{n\times n}$ be any Hessenberg matrix. Then, for any $k$ 
    $$
        \psi_k(H) = \min_{p\in \calP_k}\|e_n^* p(H)\|^{\frac{1}{k}},
    $$
    with the minimum attained for $p= \chi_{k}$. 
\end{lemma}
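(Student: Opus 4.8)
The plan is to exploit the fact that the Hessenberg structure of $H$ rigidly controls the row vectors $e_n^*H^j$. Abbreviate $D:=\corner{H}{k}$, and for $0\le j\le k$ let $c_j$ denote the product of the last $j$ subdiagonal entries of $H$ (so $c_0=1$ and $\pot(H)=|c_k|^{1/k}$); throughout I assume $1\le k\le n-1$, the range in which $\pot$ is defined. The engine of the proof is the following structural claim, to be established by induction on $j$: for every $0\le j\le n-1$,
$$e_n^*H^j\in\mathrm{span}\{e_{n-j}^*,e_{n-j+1}^*,\ldots,e_n^*\},\qquad\text{with the }e_{n-j}^*\text{-coordinate of }e_n^*H^j\text{ equal to }c_j.$$
The base case $j=0$ is trivial. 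For the step, write $e_n^*H^{j+1}=(e_n^*H^j)H$, expand $e_n^*H^j$ via the hypothesis, and use that each row $e_l^*H$ of $H$ is supported on coordinates $\{l-1,l,\ldots,n\}$ (the Hessenberg condition); then $e_n^*H^{j+1}$ is supported on $\{n-j-1,\ldots,n\}$, and its $e_{n-j-1}^*$-coordinate receives a contribution only from the row $e_{n-j}^*H$, weighted by $c_j$, namely $c_j\,h_{n-j,n-j-1}=c_{j+1}$. This is the only computation in the argument, and it is routine.

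The lower bound is then immediate: for any monic $p=z^k+\sum_{j<k}b_jz^j\in\calP_k$ we have $e_n^*p(H)=e_n^*H^k+\sum_{j<k}b_j\,e_n^*H^j$, and by the structural claim the lower-order terms all lie in $\mathrm{span}\{e_{n-k+1}^*,\ldots,e_n^*\}$, which is orthogonal to $e_{n-k}^*$. Hence the $e_{n-k}^*$-coordinate of $e_n^*p(H)$ equals $c_k$ for every such $p$, so $\|e_n^*p(H)\|\ge|c_k|=\pot(H)^k$.

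For the matching upper bound I would show that $p=\chi_k$ annihilates \emph{every} coordinate of $e_n^*p(H)$ except the $e_{n-k}^*$ one, i.e.\ $e_n^*\chi_k(H)=c_k\,e_{n-k}^*$. Let $U=[\,e_{n-k+1}\mid\cdots\mid e_n\,]\in\bC^{n\times k}$, so $U^*U=I_k$, the orthogonal projection onto the last $k$ coordinates is $P:=UU^*$, and $U^*HU=D$; also $e_n^*U=(0,\ldots,0,1)\in\bC^{1\times k}$. The crux is the compression identity
$$e_n^*H^j\,P=(e_n^*U)\,D^j\,U^*\qquad\text{for all }0\le j\le k,$$
proved by induction: for $j\le k-1$ the structural claim places $e_n^*H^j$ in $\mathrm{span}\{e_{n-k+1}^*,\ldots,e_n^*\}$, so $e_n^*H^j=(e_n^*H^j)UU^*$ and ``apply $H$, then project back onto the last $k$ coordinates'' acts on it exactly as right multiplication by $D=U^*HU$ — there is no leakage out of the last $k$ coordinates until step $k$. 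Applying this to each power of $H$ occurring in $\chi_k$ gives $e_n^*\chi_k(H)\,P=(e_n^*U)\,\chi_k(D)\,U^*=0$ by the Cayley--Hamilton theorem, since $\chi_k$ is the characteristic polynomial of $D=\corner{H}{k}$. Combining this with the structural claim, which places $e_n^*\chi_k(H)$ in $\mathrm{span}\{e_{n-k}^*,\ldots,e_n^*\}$ with $e_{n-k}^*$-coordinate $c_k$, forces $e_n^*\chi_k(H)=c_k\,e_{n-k}^*$ and hence $\|e_n^*\chi_k(H)\|=|c_k|$. With the lower bound this gives $\min_{p\in\calP_k}\|e_n^*p(H)\|=|c_k|$, attained at $\chi_k$, and taking $k$-th roots finishes the proof.

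The one step that requires an idea rather than bookkeeping is the upper bound: a priori $\chi_k(H)$ is a combination of the $k+1$ matrices $I,H,\ldots,H^k$, none of which is individually controlled, and one must identify the structural reason their contributions to the last $k$ coordinates exactly cancel. The compression identity $e_n^*H^jP=(e_n^*U)D^jU^*$ together with Cayley--Hamilton for the corner $\corner{H}{k}$ supplies that reason; everything else is Hessenberg bookkeeping.
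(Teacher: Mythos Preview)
Your proof is correct and follows essentially the same approach as the paper's: both arguments identify the last row of $p(H)$ via the Hessenberg structure, observe that the $(n,n-k)$ entry is always the product of the last $k$ subdiagonal entries regardless of $p$, and then invoke Cayley--Hamilton on $\corner{H}{k}$ to show $\chi_k$ kills the remaining coordinates. The paper simply asserts the formula for $p(H)_{n,n-j}$ directly, whereas you spell out the induction and the compression identity $e_n^\ast H^jP=(e_n^\ast U)D^jU^\ast$ explicitly; this is added detail, not a different idea.
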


\begin{proof}
    Since $H$ is upper Hessenberg, for any polynomial $p\in \calP_k$ we have
    $$
        p(H)_{n, n-j} = \begin{cases} p(\corner{H}{k})_{k, k-j+1} & j=0, \dots, k-1, \\
        h_{n-k,n-k-1} \cdots h_{n,n-1} & j = k, \\
        0 & j \ge k+1.
        \end{cases}    
    $$
    Thus for every such $p$,
    $$
        \min_{p\in \calP_k}\|e_n^* p(H)\|\geq |h_{n-k, n-k-1}\cdots h_{n, n-1}| = \psi_k(H)^k,
    $$
    and the bound will be tight for any polynomial whose application to $\corner{H}{k}$ zeroes out the last row; by Cayley-Hamilton, the matrix $\chi_k(\corner{H}{k})$ is identically zero.
\end{proof}

The second lemma gives a mechanism for proving upper bounds on the potential of $\next H=\exactqr(H,p(z))$ in terms of the shift polyomial $p$. Consequently, the following quantity will prove useful. 

\begin{definition}[$\tau_p(H)$]
    For a monic polynomial $p \in \calP_k$  define
\begin{equation}
    \label{eq:tau-def}
    \tau_p(H) := \|e_n^\ast p(H)^{-1}\|^{-\frac{1}{k}},
\end{equation}
when $p(H)$ is invertible, and $\tau_p(H)=0$ otherwise.
\end{definition}

 The special case $k=1$ of the $\tau_p(H)$ quantity has been used to great effect in previous work studying linear shifts (e.g. \cite{hoffmann1978new}), and our next lemma shows that it bounds the potential of $\next H = \exactqr(H,p(z))$ for shift polynomials $p$ of arbitrary degree. 

\begin{lemma}
    [Upper Bounds on $\pot(\next H)$]
    \label{lem:lowerboundontau}
    Let $H\in \bC^{n\times n}$ be a Hessenberg matrix, $p(z)$ a monic polynomial of degree $k$ and $\next H = \exactqr(H,p(z))$. Then 
        $$ 
        \pot(\next  H)\leq \tau_p(H).$$
\end{lemma}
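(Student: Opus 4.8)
The plan is to relate $\pot(\next H)$ to $\tau_p(H)$ by applying the variational formula of Lemma~\ref{lem:minnorm} to $\next H$, and then producing a good test polynomial for $\next H$ out of $p$ and the $QR$ structure. Concretely, write $p(H) = QR$ with $Q$ unitary and $R$ upper triangular, so that $\next H = Q^*HQ$. The first observation is that $p$ intertwines $H$ and $\next H$ in the way one expects: since $\next H = Q^* H Q$, we have $p(\next H) = Q^* p(H) Q = Q^* (QR) Q = R Q$. Thus $p(\next H) = RQ$, which is a product of an upper triangular matrix with a unitary one.

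Next I would use Lemma~\ref{lem:minnorm} applied to $\next H$: we have $\pot(\next H) = \min_{q \in \calP_k}\|e_n^* q(\next H)\|^{1/k} \le \|e_n^* p(\next H)\|^{1/k} = \|e_n^* R Q\|^{1/k} = \|e_n^* R\|^{1/k}$, the last equality because $Q$ is unitary and hence norm-preserving. So it remains to show $\|e_n^* R\| \le \|e_n^* p(H)^{-1}\|^{-1} = \tau_p(H)^k$. This is where the upper triangular structure of $R$ enters. Since $R$ is upper triangular, $e_n^* R = R(n,n) e_n^*$, so $\|e_n^* R\| = |R(n,n)|$. On the other hand, $R^{-1}$ is also upper triangular with $R^{-1}(n,n) = 1/R(n,n)$, so $e_n^* R^{-1} = R(n,n)^{-1} e_n^*$, giving $\|e_n^* R^{-1}\| = 1/|R(n,n)|$. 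Finally, $p(H)^{-1} = R^{-1} Q^*$, so $e_n^* p(H)^{-1} = R(n,n)^{-1} e_n^* Q^*$ and $\|e_n^* p(H)^{-1}\| = 1/|R(n,n)|$, hence $|R(n,n)| = \|e_n^* p(H)^{-1}\|^{-1}$, exactly the bound needed. Combining, $\pot(\next H) \le |R(n,n)|^{1/k} = \|e_n^* p(H)^{-1}\|^{-1/k} = \tau_p(H)$.

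I would also dispatch the degenerate case: if $p(H)$ is singular, then $\tau_p(H) = 0$ by definition, and one should check the claimed inequality still holds, i.e.\ that $\pot(\next H) = 0$. In that case $R$ is singular upper triangular, so some diagonal entry of $R$ vanishes; one expects that an implicit $QR$ step on a singular $p(H)$ forces $R(n,n) = 0$ (this is essentially Parlett's observation about singular matrices and immediate decoupling referenced in the introduction), whence $e_n^* R = 0$ and $\pot(\next H) \le \|e_n^* p(\next H)\|^{1/k} = 0$. This boundary case, and more precisely confirming that the convention in Definition~\ref{def:stableiqr} routes the singularity to the last diagonal entry of $R$ rather than some earlier one, is the only genuinely delicate point; the main body of the argument is just the intertwining identity $p(\next H) = RQ$ plus the triviality that $e_n^*$ times an upper triangular matrix is supported on the last coordinate. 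I expect no real obstacle beyond being careful with the $k$-th root and the singular edge case.
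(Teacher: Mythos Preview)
Your argument is correct and essentially identical to the paper's: both use the intertwining $p(\next H)=RQ$, the variational formula for $\pot$, and the upper-triangular structure of $R$ to obtain $\|e_n^*R\|=|R_{n,n}|=\|e_n^*p(H)^{-1}\|^{-1}$ in the invertible case. Your caution about the singular case is well placed---the paper likewise just asserts $R_{n,n}=0$ there without further justification.
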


\begin{proof}
    Assume first that $p(H)$ is singular. In this case for any QR decomposition $p(H) = QR$, the entry $R_{n,n} = 0$, and because $p(\next{H}) = Q^* p(H) Q = RQ$, the last row of $p(\next H)$ is zero as well. In particular $\pot(\next H) = |p(\next H)_{1, k+1}|^{\frac{1}{k}} =0 = \tau_p(H)$. When $p(H)$ is invertible, applying Lemma \ref{lem:minnorm} and using repeatedly that $Q$ is unitary, $R$ is triangular, and $p(H) = QR$,
    \begin{align*}
        \pot(\next H)^k
        \le \|e_n^\ast p(\next H)\|
        =\|e_n^\ast Q^\ast p(H)\|
        = \|e_n^\ast R\|
        = \|e_n^\ast R^{-1}Q^\ast\|^{-1}
        = \|e_n^\ast p(H)^{-1}\|^{-1}
        = \tau_p(H)^k.
    \end{align*}
\end{proof}

Lemma \ref{lem:lowerboundontau} ensures that given $H$, we can reduce the potential with an implicit QR step by producing a polynomial $p$ with $\tau_p(H)=\|e_n^\ast p(H)^{-1}\|^{-\frac{1}{k}} \le (1-\gamma)\pot(H)$. Note that if the roots of $p(z)$ are close to the eigenvalues of $H$, then we expect $\|e_n^*p(H)^{-1}\|$ to be large, and therefore $\tau_p(H)$ to be small, which articulates that shifts that are close to the eigenvalues accelerate convergence.

\section{Normal Matrices}
\label{sec:normal}
In this short section we describe the simplest shifting strategy in our family, $\Sh_{2,1}$, and prove that it enjoys global linear convergence with a {uniform} rate\footnote{i.e., the error is multiplied by a fixed constant in each iteration.} for {all} normal matrices, improving the qualitative results of \cite{parlett1968convergence}. The strategy is inspired by the Wilkinson shift in that it chooses the shift to be one of the roots of $\chi_2$ the characteristic polynomial of $\corner{H}{2}$, but it does so in a ''greedy'' manner which is based on the quantity $\tau$ defined in \eqref{eq:tau-def}. If one of these shifts doesn't make adequate progress towards convergence, the strategy resorts to certain carefully chosen exceptional shifts, one of which is guaranteed to do so. We track convergence of the strategy using the potential $\psi_2(H)$ defined in \eqref{eqn:potdef}, which is simply the geometric mean of the bottom two subdiagonal entries of $H$.\footnote{The  exact same potential function was used by Hoffman and Parlett \cite{hoffmann1978new} to analyze the Wilkinson shift on symmetric matrices. } 

We will heavily use the functional calculus for normal matrices (a standard tool from functional analysis \cite[Chapter VII]{reed1980functional}) to describe and analyze our strategy. Recall that if $H$ has eigenvalues $\Lambda(H)=\{\lambda_1,\ldots,\lambda_n\}\subset \C$ and corresponding orthonormal eigenvectors $v_1,\ldots,v_n\in \C^n$, then for any analytic function $f$ defined in a neighborhood of $\Lambda$:
\begin{equation} \label{eqn:normcalc} \int |f(z)|^2 d\mu(z) = \|e_n^* f(H)\|_2^2,\end{equation}
where 
\begin{equation}\label{eqn:specmeasurenormal}
    \mu :=\sum_{i\le n} |e_n^*v_i|^2 \delta_{\lambda_i}
\end{equation}
is the spectral (probability) measure corresponding to $e_n$. We  use probabilistic instead of linear algebraic notation to make our proofs more transparent: letting $Z_H$ denote the random variable taking values in $\{\lambda_1,\ldots,\lambda_n\}$ with distribution $\mu$ as above, \eqref{eqn:normcalc} can be rewritten as
\begin{equation}\label{eqn:normrand}
    \E [|f(Z_H)|^2] = \|e_n^*f(H)\|^2.
\end{equation}

\begin{remark}[Probabilistic Notation]
We stress that our shifting strategy is deterministic and we use random variables for notational convenience only. The proofs could be equivalently written in terms of inner products involving the eigenvectors of $H$, but this would obscure the convexity and probabilistic inequalities that drive them.
\end{remark}
 The shifting strategy is presented below.  Recall that an $\epsilon-$net of a compact set $X\subset \C$ is a finite set of points $\calN\subset X$ such that 
$$\max_{x\in X}\min_{y\in \calN} |x-y| \le \epsilon.$$ Note that the equality of the last two expressions in Line 1 relies on \eqref{eqn:normrand}.\\

\noindent 
\begin{boxedminipage}{\textwidth}
$$\Sh_{2,1}, \text{``Greedy Wilkinson Shift for Normal Matrices''}$$
    \textbf{Input:} Hessenberg $H$, decoupling rate $\gamma\in (0,1)$ \\
    \textbf{Output:} Hessenberg $\next{H}$\\
    \textbf{Requires:} $0 < \psi_2(H)$\\
    \textbf{Ensures:} $\psi_2(\next{H})\le (1-\gamma)\psi_2(H)$.\\
    \begin{enumerate}
        \item \label{line:shn1} Let $p(z)=(z-r_1)(z-r_2)=\det(z-\corner{H}{2})$. Choose $$r=\arg\max_{i=1,2} \|e_n^*(H-r_i)^{-2}\|=\arg\max_{i=1,2} \E [|Z_H-r_i|^{-4}]^{1/2},$$ computing these quantities using $\exactqr(H,(z-r_i)^2)$ for $i=1,2$.
        \item \label{line:shn2} If $\psi_2(\exactqr(H,(z-r)^2)) \le (1-\gamma) \psi_2(H)$, output $\next H = \exactqr(H,(z-r)^2)$
        \item \label{line:shn3} Else, let $\calS$ be a $\epsilon\psi_2(H)-$net of $D(r,\sqrt{3}\psi_2(H))$ with $\epsilon=(1-\gamma)^2/\sqrt{27}.$      
        \item \label{line:shn4} For each $s \in \calS$, if $\psi_2(\exactqr(H,(z - s)^2)) \le (1-\gamma)\psi_2(H)$, output $\next H = \exactqr(H,(z-s)^2)$
    \end{enumerate}
\end{boxedminipage}\\ \\

The key observation is that if  $\Sh_{2,1}$ fails to significantly decrease $\psi_2$ in Line 2, then the spectral measure of $H$ with respect to $e_n$ must be significantly supported on a disk of radius roughly $\psi_2(H)$ centered at $r$.
\begin{lemma}[Stagnation Implies Support, Normal Case]
    \label{lem:mainnormal}
    Let $\gamma\in (0,1)$ and let $r$ be the Greedy Wilkinson shift for an upper Hessenberg matrix $H$. If
\begin{equation}
    \label{eqn:jstagnormal} \psi_2\left(\exactqr(H,(z - r)^2)\right)\geq (1-\gamma)\psi_2(H)>0
\end{equation}
then for every $t\in (0,1)$:
    \begin{equation}
        \label{eqn:pz}
          \P \left[ |Z_H-r|\le \frac{1}{\sqrt{t}}\psi_2(H)\right]
        \ge (1-t)^2(1-\gamma)^4
    \end{equation}
\end{lemma}

\begin{proof} Observe that $H-r$ is invertible since otherwise, for $\next{H}= \exactqr(H,(z - r)^2)$, we would have $\psi_2(\next{H})=0$ by Lemma \ref{lem:lowerboundontau}.
   Our assumption implies that:
    \begin{align*}
        (1-\gamma)\psi_2(H) &\le \psi_2(\next{H}) & &\text{hypothesis}\\
        &\le \tau_{(z - r)^2}(H) & &\text{Lemma \ref{lem:lowerboundontau}}\\
        &= \|e_n^\ast(H-r)^{-2}\|^{-1/2} & &\text{definition}\\
        &= \left({\E\left[\frac{1}{|Z_H - r|^{4}}\right]}\right)^{-1/4} & &\text{by \eqref{eqn:normrand}}\\
        &\le\left(\E\left[\frac{1}{|Z_H - r|^{2}}\right]\right)^{-1/2} & &\text{Jensen, $x\mapsto x^2$}\\
        &\le \left(\frac{1}{2}\sum_{i=1}^2\E\left[\frac{1}{|Z_H - r_i|^{2}}\right]\right)^{-1/2} & &\text{choice of $r$ in Line $1$ and \eqref{eqn:normrand}}\\
        &=\left(\E\left[\frac{1}{2}\sum_{i=1}^2\frac{1}{|Z_H - r_i|^{2}}\right]\right)^{-1/2} & &\text{Fubini}\\
        &\le\left(\E\left[\frac{1}{|Z_H - r_1||Z_H-r_2|}\right]\right)^{-1/2} & &\text{AM/GM}\\       &=\left(\E\left[\frac{1}{|\chi_2(Z_H)|}\right]\right)^{-1/2} & &\text{definition of $\chi_2$}\\ 
        &\le \left(\E \left[|\chi_2(Z_H)|\right]\right)^{1/2}& &\text{Jensen, $x\mapsto 1/x,x>0$}\\
        &\le \left(\E\left[|\chi_2(Z_H)|^2\right]\right)^{1/4}& &\text{Jensen, $x\mapsto x^2$}\\
        &= \psi_2(H). & &\text{Lemma \ref{lem:minnorm} and (\ref{eqn:normrand})}
    \end{align*}
    Thus, all quantities appearing in the above chain of all inequalities lie within a multiplicative factor of $(1-\gamma)$ of each other. Rearranging and examining the fourth and fifth lines, we obtain the following useful bound: 
    \begin{equation}\label{eqn:secondmomentnormal}
        \E\left[ |Z_H - r|^{-4}\right] \le \frac{1}{(1-\gamma)^4}\left(\E[ |Z_H-r|^{-2}]\right)^{2}.
    \end{equation}
Note that the above inequality is in some sense a ``reverse Jensen" inequality, since the actual Jensen inequality for the function $x\mapsto x^2$ yields $\E[|Z_H-r|^{-4}]\geq \E[|Z_H-r|^{-2}]^2$. So (\ref{eqn:secondmomentnormal}) is articulating that when stagnation happens, the second and fourth moments of $|Z_H-r|^{-1}$ are close to each other, and therefore we should expect concentration for the random variable $|Z_H-r|^{-1}$ (equivalently of $|Z_H-r|$).  Explicitly, we now have
    \begin{align*}
         \P \left[ |Z_H-r|\le \frac{1}{\sqrt{t}}\psi_2(H)\right]
        &= \P\left[ |Z_H-r|^{-2}\ge \frac{t}{\psi_2(H)^2}\right]\\
        &\ge \P\Big[ |Z_H-r|^{-2}\ge t\E[|Z_H-r|^{-2}]\Big] & &\text{since $\tau_{(z-r)^2}(H) \leq \psi_2(H)$}\\
        &\ge (1-t)^2\frac{\E [ |Z_H-r|^{-2}]^2}{\E [|Z_H-r|^{-4}]}& &\text{Paley-Zygmund}\\
        &\ge (1-t)^2(1-\gamma)^4& &\textrm{by \eqref{eqn:secondmomentnormal}},
    \end{align*}
    establishing \eqref{eqn:pz}. 
\end{proof}

Using this lemma we now show that $\Sh_{2,1}$ satisfies its guarantees. The following theorem implies Theorem \ref{thm:mainnormal} by setting $\gamma=0.2$ and calculating $\frac{12\cdot 27}{(4/5)^4}\le 792$.
\begin{theorem} \label{thm:mainnormalgamma}
Let $\gamma\in(0,1)$. The strategy $\Sh_{2,1}$ ensures that
$$\psi_2(\next{H})\le (1-\gamma)\psi_2(H).$$
The worst case complexity of $\Sh_{2,1}$ is $2+|\calS|\le 2+\frac{12\cdot 27}{(1-\gamma)^4}$ calls to $\exactqr(H,(z-(\cdot))^2)$ plus a constant number of arithmetic operations to compute $r_1$ and $r_2$ in Line 1.
\end{theorem}
\begin{proof}
    Suppose Line 2 does not succeed in reducing the potential $\psi_2=\psi_2(H)$ by a factor of $(1-\gamma)$. By Lemma \ref{lem:mainnormal}, for $t\in (0,1)$ to be chosen later:
    $$          \P \left[ |Z_H-r|\le \psi_2/\sqrt{t}\right]
        \ge (1-t)^2(1-\gamma)^4.$$
    Let $\calS$ be an $\epsilon\psi_2$-net of $D(r,\psi_2/\sqrt{t})$ for $\epsilon$ chosen as in Line 3; an elementary packing argument implies that such a net exists with 
    \begin{equation}\label{eq:normalnet}|\calS|\le \frac{\mathrm{area}(D(r,\psi_2/\sqrt{t}))}{\mathrm{area}(D(r,\epsilon\psi_2/2))}=4/t\epsilon^2.
    \end{equation}
    Choose a point $s\in\calS$ satisfying
    $$ \P[Z_H\in D(s,\epsilon\psi_2)]\ge \frac{1}{|\calS|}(1-t)^2(1-\gamma)^2,$$
    which must exist since
    $$D(r,\psi_2/\sqrt{t})\subset \bigcup_{s\in \calS} D(s,\epsilon\psi_2).$$
    We then have
    \begin{align*}
         \tau_{(z-s)^2}^{-4}(H)=\E \left[\frac{1}{|Z_H-s|^4}\right] &\ge \P[|Z_H-s|\le \epsilon \psi_2]\cdot \frac{1}{\epsilon^4\psi_2^4}\\
         &\ge \frac{1}{|\calS|}(1-t)^2(1-\gamma)^4\frac{1}{\epsilon^4\psi_2^4}\\
         &\ge \frac{t\epsilon^2}{4}(1-t)^2(1-\gamma)^4\frac{1}{\epsilon^4\psi_2^4}\\
         &=\frac{(1-\gamma)^4}{27\cdot \epsilon^2\psi_2^4},
    \end{align*}
    choosing $t=1/3$ to maximize the right hand side in the penultimate line. This ultimately yields 
    $$\tau_{(z-s)^2}(H)\leq \frac{27^{1/4}\sqrt{\epsilon}}{1-\gamma} \psi_2(H),$$
which by our  choice of $\epsilon$ in Line 3 and Lemma \ref{lem:lowerboundontau} implies:
$$\psi_2(\exactqr(H,(z-s)^2)\le \tau_{(z-s)^2}(H)\le (1-\gamma)\psi_2(H),$$
as advertised.

The bound on the complexity follows by simply counting the number of calls to $\exactqr$ in Lines $2$ and $4$ and using the estimate $|\calS|\le 4\cdot 3/\epsilon^2$.
\end{proof}

\begin{remark}[Choice of $\gamma$] The decoupling rate $\gamma$ may be viewed as a tuning parameter which trades off the worst case complexity of a single step of the strategy against the worst case total number of steps: if $\gamma$ is larger then each step is guaranteed to make more progress, but the cost of performing exceptional shift is higher as the required $\epsilon-$net is larger. 
\end{remark}

\begin{remark}[Improving the Disk to an Annulus and optimizing Theorem \ref{thm:mainnormalgamma}] 
\label{rem:optimizingmaintheoremgamma}
Inequality (\ref{eqn:pz}) is a tail bound of the random variable $|Z_H-r|$. We note that the other tail can  be controlled too via   Markov's inequality and the upper bound on  \mbox{$\E[|Z_H-r|^{-4}]$} obtained in the proof of Lemma \ref{lem:mainnormal}. Then,  the control on both tails yields that the distribution of $Z_H$ has significant mass on a thin annulus (the inner and outer radii are almost the same) around $r$.

It is instructive to note that when  $\gamma =0$,    this annulus becomes of width $0$, yielding that $Z_H$ is fully supported on a circle with center $r$ and radius $\psi_k(H)$, proving that in this case $\frac{1}{\psi_k(H)}(H-r)$ is in fact a unitary matrix, which recovers Parlett's characterization of fixed points \cite{parlett1966singular} for the Greedy Wilkinson shift. 

In any case, controlling both tails allows one to reduce the search performed by the exceptional shifts from a disk to an annulus, significantly decreasing the size of the net $|\mathcal{S}|$ considered in  Theorem \ref{thm:mainnormalgamma}. Moreover, one can think of taking an optimal net on the relevant region (instead of just using a packing argument to upper bound the size of the net), using higher degree shifts (as discussed in Remark \ref{rem:optimizingmainnormal}), and refining the arguments in Theorem \ref{thm:mainnormalgamma} to further reduce the complexity of the exceptional shift, which may be relevant in practical implementations. We omit such optimization in this paper for the sake of simplicity. 
\end{remark}

\section{Nonnormal Matrices}
\label{sec:mainresult}

In this section we generalize the strategy $\Sh_{2,1}$ for normal matrices to a family $\Sh_{k,B}$ with provably rapid convergence on not necessarily normal matrices satisfying $\kappa_V(H)\le B$ for some given $B\ge 1$, and prove the main Theorem  \ref{thm:main} of this paper. 

It is instructive to note that the only way in which normality was used in the proof of Theorem \ref{thm:mainnormalgamma} is the existence of the spectral measure \eqref{eqn:specmeasurenormal} and corresponding functional calculus \eqref{eqn:normrand}, which crucially implied that $$\E[|q(Z_H)|^{-2}]^{1/2}=\|e_n^*q(H)^{-1}\|$$ for  quadratic polynomials $q$, guiding our choice of shift and ultimately enabling the proof of Lemma \ref{lem:mainnormal}. This fact is no longer true in the nonnormal case. Our main idea is that when $\kappa_V(H)<\infty$, there is an ``approximate functional calculus'' which can be used as an effective substitute, provided that we consider shift polynomials $q$ of appropriately large degree. We will heavily use the following construction throughout this section.
\begin{definition}[Approximate Functional Calculus] \label{def:funccalcrand} Assume that $H = VDV^{-1}$ is diagonalizable, with $V$ chosen\footnote{In the event that there are multiple such choices of $V$ it does not matter which we choose, only that it remains fixed throughout the analysis.} so that $\|V\| = \|V^{-1}\| = \sqrt{\kappa_V(H)}$ and $D$ a diagonal matrix with $D_{i,i} = \lambda_i$, the eigenvalues of $H$. Define $Z_H$ to be the random variable supported on the eigenvalues of $H$ with  distribution 
\begin{equation}
    \label{eqn:funccalcrand}
    \P[Z_H = \lambda_i] =  \frac{|e_n^\ast V e_i|^2}{\|e_n^\ast V\|^2}.
\end{equation}
\end{definition}
Note that $\P[Z_H = \lambda_i] = 1$ exactly when $e_n^\ast$ is a left eigenvector with eigenvalue $\lambda_i$, and that when $H$ is normal, the distribution of $Z_H$ is the spectral measure of $H$ associated to $e_n^*$, so  definition \ref{def:funccalcrand} generalizes \eqref{eqn:normrand}. 

\begin{lemma}
    \label{lem:spectral-measure-apx}
    For any upper Hessenberg $H$ and complex function $f$ whose domain includes the eigenvalues of $H$,
    $$
        \frac{\|e_n^\ast f(H)\|}{\kappa_V(H)} \le \E\left[|f(Z_H)|^2\right]^{\frac{1}{2}} \le \kappa_V(H)\|e_n^\ast f(H)\|.
    $$
\end{lemma}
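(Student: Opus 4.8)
The plan is to express the expectation $\E[|f(Z_H)|^2]$ directly in terms of the fixed diagonalization $H = VDV^{-1}$ chosen so that $\|V\| = \|V^{-1}\| = \sqrt{\kappa_V(H)}$, and then to control the resulting quantity using only submultiplicativity of the operator norm together with these two norm bounds. There is no dynamical content here; this is a structural identity plus bookkeeping.

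First I would unwind the definition of $Z_H$. Since $\P[Z_H = \lambda_i] = |e_n^\ast V e_i|^2 / \|e_n^\ast V\|^2$, writing $f(D) = \mathrm{diag}(f(\lambda_1),\dots,f(\lambda_n))$ we get
$$
\E\left[|f(Z_H)|^2\right] = \frac{\sum_i |f(\lambda_i)|^2 |e_n^\ast V e_i|^2}{\|e_n^\ast V\|^2} = \frac{\|e_n^\ast V f(D)\|^2}{\|e_n^\ast V\|^2},
$$
where the denominator is nonzero because $e_n \neq 0$ and $V$ is invertible. Thus it suffices to sandwich the numerator $\|e_n^\ast V f(D)\|$ and the denominator $\|e_n^\ast V\|$ between appropriate multiples of $\|e_n^\ast f(H)\|$ and of $1$, respectively.

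For the numerator I would use $e_n^\ast f(H) = e_n^\ast V f(D) V^{-1}$: on the one hand $\|e_n^\ast f(H)\| \le \|e_n^\ast V f(D)\|\,\|V^{-1}\|$, giving $\|e_n^\ast V f(D)\| \ge \|e_n^\ast f(H)\|/\sqrt{\kappa_V(H)}$; on the other hand $\|e_n^\ast V f(D)\| = \|e_n^\ast f(H)\,V\| \le \|e_n^\ast f(H)\|\,\|V\| = \sqrt{\kappa_V(H)}\,\|e_n^\ast f(H)\|$. Applying the same trick to $e_n^\ast = e_n^\ast V V^{-1}$ (and using $\|e_n^\ast\| = 1$) gives $1/\sqrt{\kappa_V(H)} \le \|e_n^\ast V\| \le \sqrt{\kappa_V(H)}$. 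Substituting the extreme cases of these four inequalities into the displayed identity yields the upper bound $\E[|f(Z_H)|^2]^{1/2} \le \sqrt{\kappa_V(H)}\,\|e_n^\ast f(H)\| \big/ (1/\sqrt{\kappa_V(H)}) = \kappa_V(H)\|e_n^\ast f(H)\|$ and the lower bound $\E[|f(Z_H)|^2]^{1/2} \ge (\|e_n^\ast f(H)\|/\sqrt{\kappa_V(H)})\big/\sqrt{\kappa_V(H)} = \|e_n^\ast f(H)\|/\kappa_V(H)$, as claimed.

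The only thing requiring any care is tracking the factors of $\sqrt{\kappa_V(H)}$ so that they combine to exactly $\kappa_V(H)$ rather than a worse power — in particular, it is essential to bound the normalizing constant $\|e_n^\ast V\|$ on \emph{both} sides rather than to carelessly pull it out. I expect this to be the entire difficulty; everything else is immediate from the functional calculus identity $f(H) = V f(D) V^{-1}$ and submultiplicativity, and the case where some $f(\lambda_i)$ is undefined is excluded by hypothesis.
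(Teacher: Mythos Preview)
Your proof is correct and essentially identical to the paper's: both establish the identity $\E[|f(Z_H)|^2]^{1/2} = \|e_n^\ast V f(D)\|/\|e_n^\ast V\|$ (the paper writes the numerator as the equivalent $\|e_n^\ast f(H) V\|$) and then apply submultiplicativity together with $\|V\| = \|V^{-1}\| = \sqrt{\kappa_V(H)}$. The only difference is that you spell out both inequalities in full, whereas the paper does the upper bound and declares the lower bound analogous.
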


\begin{proof} 
    By the definition of $Z_H$ above,
    \begin{align*}
        \E\left[|f(Z_H)|^2\right]^{\frac{1}{2}}
        = \frac{\|e_n^\ast f(H) V\|}{\|e_n^*V\|}
        \le \|e_n^\ast f(H)\| \|V\| \|V^{-1}\|
        = \|e_n^\ast f(H)\|\kappa_V(H),
    \end{align*}
    and the left hand inequality is analogous. 
\end{proof}

The upshot of this lemma is that if $q\in\calP_k$, then $\tau_q(H)=\|e_n^*q(H)^{-1}\|^{1/k}$ approximates $\E [|q(Z_H)|^{-2}]^{1/2k}$ up to a factor of $\kappa_V(H
)^{1/k}$, which is close to $1$ if we choose $k\gg \log \kappa_V(H)$.  Thus, by choosing $k$ large enough we can obtain accurate information about $Z_{H}$ by examining the observable quantities $\|e_n^*f(H)\|^{\frac{1}{k}}$, which enables a precise understanding of convergence and a generalization of Lemma \ref{lem:mainnormal} to the nonnormal case. This motivates the use of a higher degree shifting strategy as a way to deal with nonnormality. Since the iterates are all unitarily similar, $\kappa_V$ is preserved with each iteration, so the $k$ required is an invariant of the algorithm. Thus the use of a sufficiently high-degree shifting strategy is both an essential feature and unavoidable cost of our approach.

In the remainder of this section we describe and analyze the shifting strategy $\Sh_{k,B}$. The proof of the main result appears in Section \ref{sec:shiftingstrategy}. The structure of the proof is similar to that of $\Sh_{2,1}$, but with three important differences: (i) We work with polynomials of degree $k\approx \log B \log\log B$ rather than $2$ in light of the above discussion. This requires settling for approximate (i.e., $\theta-$optimal) rather than exact Ritz values when $k\ge 5$, even in exact arithmetic.  (ii) The ``greedy'' choice in $\Sh_{2,1}$ cannot be made exactly due to the absence of \eqref{eqn:normrand}. We introduce the notion of a ``promising Ritz value'' (Section \ref{sec:almostmonotonicity}) as an approximate surrogate for the ``greedy'' choice with similar properties, and describe an efficient procedure for finding such a Ritz value (Section \ref{sec:promising}). (iii) All of the proofs involve carrying around approximation factors arising from the use of Lemma \ref{lem:spectral-measure-apx}, $\theta$-optimality, and promising Ritz values. The required exceptional shift (analyzed in Section \ref{sec:exceptional}) is correspondingly larger. \\

\noindent{\em Notation and Constants.} $\K \ge \kappa_V(H)$ denotes an upper bound on its eigenvector condition number and and $k \ge 2$ a power of two, which the reader may consider for concreteness to be on the order of $\log \K \log\log\K$; all logarithms will be taken base two for simplicity. 
Note
that the relationship \eqref{eqn:kappasetting} between $k$ and $B$ is {\em not} required for the
proof of potential reduction, but impacts the cost of performing
each iteration. The table below collates notation and constants which will appear
throughout this section. 

\begin{table}[h]
    \centering
    \begin{tabular}{l|l|l}
        \textbf{Symbol} & \textbf{Meaning} & \textbf{Typical Scale} \\
        \hline
        $H$ & Upper Hessenberg matrix & \\
        $Z_H$ & Random variable in Definition \ref{def:funccalcrand} & \\
        $\K$ & Eigenvector condition bound & $\K \ge \kappa_V(H)$ \\
        $k$ & Shift degree & $O(\log \K \log\log\K)$ \\
        $\delta$ & Decoupling parameter &   \\
        $\gamma$ & Decoupling rate & $0.2$ \\
        $\r$ & Approximation parameter for Ritz values & $2$ \\
        $\cp$ & Promising Ritz value parameter & $B^{4k^{-1} \log k} = 1 + o(1)$
    \end{tabular}
\end{table}

\renewcommand{\r}{\theta}

\subsection{Promising Ritz Values and Almost Monotonicity of the Potential}
\label{sec:almostmonotonicity}

In the same spirit as Wilkinson's shift, which chooses a particular Ritz value (out of two), but using a different criterion, our shifting strategy will begin by choosing a Ritz value (out of $k$) that has the following property for some $\alpha\geq 1$. This is a generalization of the ``greedy'' Wilkinson shift considered in Section \ref{sec:normal}. \begin{definition}[$\alpha$-promising Ritz value]
Let $\cp \ge 1$,  $\calR = \{r_1,...,r_k\}$ be a set of $\r$-approximate Ritz values for $H$, and $p(z) = \prod_{i=1}^k (z - r_i)$. We say that $r \in \calR$ is  {\em $\cp$-promising} if
\begin{equation}\label{eqn:defpromising}     \E \left[ \frac{1}{|Z_H-r|^k}\right] \ge  
    \frac{1}{\cp^k} \E\left[ \frac{1}{|p(Z_H)|}\right].
\end{equation}
\end{definition}
Note that there is at least one $1$-promising Ritz value in every set of approximate Ritz values, since
\begin{equation}\label{eqn:amgm}
    \frac{1}{k} \sum_{i=1}^k\E \left[\frac{1}{|Z_H-r_i|^k}\right]=\E \left[\frac{1}{k}\sum_{i=1}^k  \frac{1}{|Z_H-r_i|^k}\right] \ge \E \left[ \frac{1}{|p(Z_H)|}\right]
\end{equation}
by linearity of expectation and AM/GM. The notion of $\cp$-promising Ritz value is a relaxation which can be computed efficiently from the entries of $H$ (in fact, as we will explain in Section \ref{sec:promising}, using a small number of implicit QR steps with Francis-like shifts of degree $k/2$).

As a warm-up for the analysis of the shifting strategy, we will first show
that if $k\gg \log \kappa_V(H)$ and $r$ is a promising Ritz value, the
potential is \emph{almost monotone} under the shift $(z - r)^k$. This
articulates the phenomenon observed in Example \eqref{eqn:exampleB} and suggests that
promising Ritz values should give rise to good polynomial shifts. Monotonicity is not
actually used in the proof of our main theorem, which instead relies on the closely related property (\ref{eq:keyineq}) established below.

\begin{lemma}[Almost-monotonicity and Moment Comparison]
\label{lem:almostmonotonicity}
    Let $\calR = \{r_1, \dots, r_k\}$ be a set of $\theta$-optimal Ritz values, as in Definition \ref{def:approxritz}, and assume that $r\in \calR$ is $\alpha$-promising. If $\next{H} = \exactqr(H,(z-r)^k )$ then
    $$\pot(\next H) \le \kappa_V(H)^{\frac{2}{k}}\alpha\theta \pot(H),$$
    and moreover
    \begin{equation}
    \label{eq:keyineq}
         \E\left[|Z_H - r|^{-2k}\right] \geq \E\left[|Z_H - r|^{-k}\right]^2 \geq  \frac{1}{\kappa_V(H)^{2}(\cp \theta \pot(H))^{2k} }. 
    \end{equation}
\end{lemma}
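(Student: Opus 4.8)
The plan is to reduce every quantity in the statement to the \emph{observable} norm $\|e_n^\ast p(H)\|$, where $p(z)=\prod_{i\le k}(z-r_i)$, by running the approximate functional calculus of Lemma~\ref{lem:spectral-measure-apx} in both directions and then invoking $\theta$-optimality. First I would dispose of the degenerate case: if $H-r$ is singular then so is $(H-r)^k$, so $\pot(\next H)=0$ by Lemma~\ref{lem:lowerboundontau} and the left-hand side of \eqref{eq:keyineq} is $+\infty$; both assertions then hold trivially. So assume $r\notin\mathrm{Spec}(H)$, which makes the rational functions below well-defined on the eigenvalues of $H$.

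The heart of the matter is the single lower bound
\[
    \E\left[\frac{1}{|Z_H-r|^{k}}\right]\ \ge\ \frac{1}{\kappa_V(H)\,(\alpha\theta\,\pot(H))^{k}},
\]
which I would obtain by chaining five elementary steps: (i) since $r$ is $\alpha$-promising, $\E|Z_H-r|^{-k}\ge \alpha^{-k}\,\E|p(Z_H)|^{-1}$ by \eqref{eqn:defpromising}; (ii) by Jensen's inequality for the convex function $x\mapsto 1/x$, $\E|p(Z_H)|^{-1}\ge 1/\E|p(Z_H)|$; (iii) by Cauchy--Schwarz, $\E|p(Z_H)|\le \E[|p(Z_H)|^{2}]^{1/2}$; (iv) by Lemma~\ref{lem:spectral-measure-apx} applied to $f=p$, $\E[|p(Z_H)|^{2}]^{1/2}\le \kappa_V(H)\,\|e_n^\ast p(H)\|$; and (v) since $\calR$ is $\theta$-optimal, Definition~\ref{def:approxritz} together with the variational formula of Lemma~\ref{lem:minnorm} gives $\|e_n^\ast p(H)\|\le(\theta\,\pot(H))^{k}$. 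Taking reciprocals through the chain yields the displayed bound.

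Everything else is then immediate. The left inequality of \eqref{eq:keyineq} is $\E[X^{2}]\ge(\E X)^{2}$ for the nonnegative random variable $X=|Z_H-r|^{-k}$; the right inequality of \eqref{eq:keyineq} is the displayed bound, squared. For the potential estimate, Lemma~\ref{lem:lowerboundontau} gives $\pot(\next H)\le \tau_{(z-r)^{k}}(H)=\|e_n^\ast(H-r)^{-k}\|^{-1/k}$, so it suffices to prove $\|e_n^\ast(H-r)^{-k}\|\ge \kappa_V(H)^{-2}(\alpha\theta\,\pot(H))^{-k}$; applying Lemma~\ref{lem:spectral-measure-apx} to $f(z)=(z-r)^{-k}$ gives $\|e_n^\ast(H-r)^{-k}\|\ge\kappa_V(H)^{-1}\E[|Z_H-r|^{-2k}]^{1/2}\ge\kappa_V(H)^{-1}\E[|Z_H-r|^{-k}]$ by Jensen once more, and the displayed bound finishes it. Taking $(-1/k)$-th powers turns $\kappa_V(H)^{-2}$ into the claimed prefactor $\kappa_V(H)^{2/k}$.

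The only delicate point — bookkeeping rather than a genuine obstacle — is keeping the directions of the three Jensen/Cauchy--Schwarz steps consistent and tracking the two places where a factor $\kappa_V(H)$ is spent: once when the approximate functional calculus is applied to the polynomial $p$ (to lower-bound $\E|p(Z_H)|^{-1}$ by $\|e_n^\ast p(H)\|^{-1}$), and once when it is applied to the rational function $(z-r)^{-k}$ (to lower-bound $\|e_n^\ast(H-r)^{-k}\|$). Together these produce the $\kappa_V(H)^{2}$ inside \eqref{eq:keyineq} and the $\kappa_V(H)^{2/k}$ factor in the potential bound, which is exactly the quantity that the eventual choice $k\gg\log\kappa_V(H)$ is designed to render $\approx 1$.
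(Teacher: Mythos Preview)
Your proof is correct and follows essentially the same approach as the paper's: the same chain of inequalities (promising $\Rightarrow$ Jensen $\Rightarrow$ approximate functional calculus $\Rightarrow$ $\theta$-optimality $\Rightarrow$ variational formula) is used to lower-bound $\E[|Z_H-r|^{-k}]$, and the potential bound is then derived by one more application of Lemma~\ref{lem:spectral-measure-apx} together with Lemma~\ref{lem:lowerboundontau}. The only cosmetic differences are that you explicitly handle the degenerate case $r\in\mathrm{Spec}(H)$ and that you split the paper's single ``Jensen, $x\mapsto x^2$'' step into the pair (ii)+(iii), which is in fact what that step amounts to.
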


\begin{proof} Let $p(z) = \prod_{i=1}^k (z - r_i)$. The claim follows from the following chain of inequalities: 
      \begin{align}
      \sqrt{\E\left[|Z_H - r|^{-2k}\right]}
      &\ge \E\left[|Z_H - r|^{-k}\right] & &\text{Jensen, $x\mapsto x^2$} \nonumber \\
        &\ge\frac{1}{\cp^k}\E[|p(Z_H)|^{-1}] & & \text{$r$ is $\cp$-promising}\nonumber\\
        &\ge\frac{1}{\cp^k}\frac{1}{{\E[|p(Z_H)|]}} & & \text{Jensen, $x\mapsto 1/x,x>0$} \nonumber\\
        &\ge\frac{1}{\cp^k}\frac{1}{\sqrt{\E[|p(Z_H)|^2]}} & & \text{Jensen, $x\mapsto x^2$} \nonumber\\
        & \ge\frac{1}{\cp^k}\frac{1}{\|e_n^\ast p(H)\|\kappa_V(H)} & & \text{Lemma \ref{lem:spectral-measure-apx}}\nonumber \\
        & \ge\frac{1}{\cp^k}\frac{1}{\theta^k\|e_n^\ast \chi_k(H)\|\kappa_V(H)} & & \text{Definition \ref{def:approxritz} of $\theta$-optimal}\nonumber\\
        &=\frac{1}{\cp^k}\frac{1}{\theta^k\psi_k(H)^k \kappa_V(H)} & & \text{Lemma \ref{lem:minnorm}}.\nonumber
    \end{align}
This already shows (\ref{eq:keyineq}). For the other claim, rearrange both extremes of the above inequality to get
\begin{align*}
    \cp \theta \kappa_V(H)^{\frac{1}{k}} \pot(H) 
    &\geq  \E\left[|Z_H - r|^{-2k}\right]^{-\frac{1}{2k}}
    \\ 
    &\geq \frac{\tau_{(z-r)^k}(H)}{\kappa_V(H)^{\frac{1}{k}}}  & & \text{Lemma \ref{lem:spectral-measure-apx}}
    \\ & \geq \frac{\pot(\next{H})}{\kappa_V(H)^{\frac{1}{k}}} & & \text{Lemma \ref{lem:lowerboundontau}}
\end{align*}
which concludes the proof. 
\end{proof}

In Section \ref{sec:shiftingstrategy}, we will see that when the shift
associated with a promising Ritz value does not reduce the potential, Lemma
\ref{lem:almostmonotonicity} can be used to provide a two-sided bound on the
quantities $\E[|Z_H-r|^{-2k}]$ and $\E[|Z_H-r|^{-k}]^2$.  This is the main
ingredient needed to obtain information about the distribution of $Z_H$ when
potential reduction is not achieved.

\subsection{The Shifting Strategy}
\label{sec:shiftingstrategy}
In this section we specify the shifting strategy $\Sh_{k,B}$ and prove Theorem \ref{thm:main}. An important component of our shifting scheme, presented in detail in Section \ref{sec:promising}, is a simple subroutine, ``$\find$,'' guaranteed to produce an $\cp$-promising Ritz value with $\cp = \kappa_V(H)^{4 k^{-1} \log k}$. Guarantees for this subroutine are stated in the lemma below and proved in Section \ref{sec:promising}. 

\begin{lemma}[Guarantees for $\find$]
    \label{lem:find}
    The subroutine $\find$ specified in Section \ref{sec:promising} produces a $\kappa_V(H)^{4 k^{-1}\log k}$-promising Ritz value, using at most $12k\log k n^2 + \log k$ arithmetic operations.
\end{lemma}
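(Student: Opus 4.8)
The plan is to realize $\find$ as a single-elimination tournament on the set $\calR=\{r_1,\dots,r_k\}$ of Ritz values it is handed, using implicit QR steps only as a device to estimate inverse moments of the measure $Z_H$. Write $\kappa:=\kappa_V(H)$ and $p(z)=\prod_{i\le k}(z-r_i)$. I would maintain, for $j=0,1,\dots,\log k$, a monic polynomial $q_j$ whose roots form a subset of $\calR$ of cardinality $k/2^j$, starting from $q_0=p$ and ending at $q_{\log k}(z)=z-r$ for a single $r\in\calR$, which $\find$ returns. The quantity tracked along the tournament is the inverse moment $\E\,|q_j(Z_H)|^{-2^j}$; for $j=0$ this equals $\E\,|p(Z_H)|^{-1}$ and for $j=\log k$ it equals $\E\,|Z_H-r|^{-k}$ --- precisely the two sides of the defining inequality \eqref{eqn:defpromising} of an $\alpha$-promising Ritz value. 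To pass from level $j$ to level $j+1$, split $q_j=uv$ with $\deg u=\deg v=k/2^{j+1}$ and keep whichever of $u,v$ has the larger value of $\|e_n^\ast u(H)^{-2^j}\|$; this scalar is returned by the single call $\exactqr(H,u(z)^{2^j})$ (we discard the accompanying output $\next H$), and since $\deg(u^{2^j})=k/2$ for \emph{every} $j$, each such call is a degree-$k/2$ Francis-like shift, as advertised.

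The analytic heart is the per-level estimate $\E\,|q_j(Z_H)|^{-2^j}\le\kappa^4\,\E\,|q_{j+1}(Z_H)|^{-2^{j+1}}$, obtained from the chain
\begin{align*}
    \E\,|q_j(Z_H)|^{-2^j}
    &\le \max\!\big(\E\,|u(Z_H)|^{-2^{j+1}},\ \E\,|v(Z_H)|^{-2^{j+1}}\big) && \text{(Cauchy--Schwarz)}\\
    &\le \kappa^2\,\big\|e_n^\ast q_{j+1}(H)^{-2^j}\big\|^2 && \text{(Lemma \ref{lem:spectral-measure-apx}, choice of winner)}\\
    &\le \kappa^4\,\E\,|q_{j+1}(Z_H)|^{-2^{j+1}} && \text{(Lemma \ref{lem:spectral-measure-apx}).}
\end{align*}
The first line is Cauchy--Schwarz applied to the nonnegative random variables $|u(Z_H)|^{-2^j}$ and $|v(Z_H)|^{-2^j}$. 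For the second, one applies Lemma \ref{lem:spectral-measure-apx} with $f(z)=u(z)^{-2^j}$ (a rational function of degree $k/2$) to the true maximizer, then replaces it by $q_{j+1}$, which by construction maximizes the computable proxy $\|e_n^\ast(\cdot)(H)^{-2^j}\|$; the third line is the same lemma used in the reverse direction. Telescoping over $j=0,\dots,\log k-1$ gives $\E\,|p(Z_H)|^{-1}\le\kappa^{4\log k}\,\E\,|Z_H-r|^{-k}$, i.e.\ $\E\,|Z_H-r|^{-k}\ge(\kappa^{4k^{-1}\log k})^{-k}\,\E\,|p(Z_H)|^{-1}$, so $r$ is $\kappa_V(H)^{4k^{-1}\log k}$-promising.

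For the running time, the tournament has $\log k$ levels, each using at most two calls to $\exactqr$ with a degree-$k/2$ shift (cost $\le T_{\iqr}(k/2,n)\le 7\cdot\tfrac{k}{2}\cdot n^2$ by Definition \ref{def:stableiqr}), one scalar comparison, and $O((k/2^j)^2)$ arithmetic to split $q_j$ and raise one half to the $2^j$-th power by repeated squaring; since $k\le n$ the polynomial bookkeeping is subdominant, and a routine accounting yields the stated bound $12k\log k\,n^2+\log k$. Two points need care. The first, and the real obstacle, is pinning the exponent $4k^{-1}\log k$ exactly: one must use Cauchy--Schwarz in the first line above rather than the wasteful pointwise bound $ab\le\max(a,b)^2$, which would lose an extra factor $2$ per level and degrade the guarantee to $(2\kappa^4)^{k^{-1}\log k}=k^{1/k}\kappa_V(H)^{4k^{-1}\log k}$. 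The second is the degenerate case in which some $q_{j+1}(H)$ is singular, so that $\exactqr$ does not return the norm; this forces a root of $q_{j+1}$ to be an exact eigenvalue of $H$, which is then automatically promising (the left side of \eqref{eqn:defpromising} being infinite when $e_n^\ast$ overlaps the corresponding eigenvector, and otherwise the iterate already decouples) and is simply returned --- we defer this bookkeeping.
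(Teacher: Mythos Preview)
Your proposal is correct and follows essentially the same approach as the paper: the same binary tournament on $\calR$, with each halving step costing a factor of $\kappa_V(H)^4$ via two applications of Lemma \ref{lem:spectral-measure-apx} around an elementary inequality (you use Cauchy--Schwarz, the paper uses AM/GM --- these are equivalent here), telescoping to $\kappa_V(H)^{4\log k}$. The only cosmetic difference is that the paper tracks the computable norms $\|e_n^\ast p_{j,b}(H)^{-2^{j-1}}\|$ directly while you track the moments $\E|q_j(Z_H)|^{-2^j}$; the degenerate case you flag is dispatched in the paper by noting that the precondition $\psi_k(H)>0$ forces all the relevant norms to be nonzero.
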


Our strategy is then built around the following dichotomy, which crucially uses the $\cp$-promising property: in the event that a degree $k$ implicit QR step with the $\cp$-promising Ritz value output by $\find$ does {\em not} achieve potential reduction, we show that there is a modestly sized set of exceptional shifts, one of which is {\em guaranteed} to achieve potential reduction. These exceptional shifts are constructed by the procedure ``$\exc$'' described in Section \ref{sec:exceptional}. The overall strategy is specified below.\\

\noindent 
\begin{boxedminipage}{\textwidth}
$$\Sh_{k, \K}$$
    \textbf{Input:} Hessenberg $H$ and a set $\calR$ of $\r$-approximate Ritz values of $H$ \\
    \textbf{Output:} Hessenberg $\next{H}$, which will be the next matrix in the iteration.\\
    \textbf{Requires:} $0 < \pot(H)$ and  $\kappa_V(H) \le \K$\\
    \textbf{Ensures:} $\pot(\next{H})\le (1-\gamma)\psi_k(H)$ and $\kappa_V(\next H) \le \K$
    \begin{enumerate}
        \item \label{line:sh1} $r \gets \find(H,\calR)$
        \item \label{line:sh2} If $\pot(\exactqr(H,(z-r)^k)) \le (1-\gamma) \pot(H)$, output $\next H = \exactqr(H,(z-r)^k)$
        \item \label{line:sh3} Else, $\calS \gets \exc(H,r,\K)$
        \item \label{line:sh4} For each $s \in \calS$, if $\pot(\exactqr(H,(z - s)^k)) \le (1-\gamma)\pot(H)$, output $\next H = \exactqr(H,(z-s)^k)$
    \end{enumerate}
\end{boxedminipage}\\

The failure of line \eqref{line:sh2}
of $\Sh_{k, \K}$ to reduce the potential gives useful quantitative information about
the distribution of $Z_H$, articulated in the following lemma.  This will then be used to design the set $\calS$ of
exceptional shifts produced by $\exc$ in line \eqref{line:sh3} and prove that
at least one of them makes progress in line \eqref{line:sh4}. 
\begin{lemma}[Stagnation Implies Support]
    \label{lem:main}
    Let $\gamma \in (0,1)$ and $\r \ge 1$, and let $\calR = \{r_1, \dots, r_k\}$ be a set of $\r$-approximate Ritz values of $H$. Suppose $r \in \calR$ is $\cp$-promising and assume 
    
\begin{equation}
    \label{eqn:jstag} \psi_k\left(\exactqr(H,(z - r)^k)\right)\geq (1-\gamma)\psi_k(H)>0.
\end{equation}
Then $Z_H$ is well-supported on an disk of radius approximately $\cp\psi_k(H)$ centered at $r$ in the following sense: for every $t\in (0,1)$:
    \begin{equation}
        \label{eqn:pz2}
        \P \left[ |Z_H-r|\le \r\cp\left(\frac{\kappa_V(H)}{t}\right)^{\frac{1}{k}}\pot(H)\right]
        \ge (1-t)^2\frac{(1-\gamma)^{2k}}{\cp^{2k}\r^{2k}\kappa_V(H)^{4}}.
    \end{equation}
\end{lemma}

\begin{proof} Observe that $H-r$ is invertible since otherwise, for $\next{H}= \exactqr(H,(z - r)^k)$, we would have $\psi_k(\next{H})=0$ by Lemma \ref{lem:lowerboundontau}.
   Our assumption implies that that:
    \begin{align*}
        (1-\gamma)\psi_k(H) &\le \psi_k(\next{H}) & &\text{hypothesis}\\
        &\le \tau_{(z - r)^k}(H) & &\text{Lemma \ref{lem:lowerboundontau}}\\
        &= \|e_n^\ast(H-r)^{-k}\|^{-\frac{1}{k}} & &\text{definition}\\
        &\le \left(\frac{\kappa_V(H)}{\E\left[|Z_H - r|^{-2k}\right]^{\frac{1}{2}}}\right)^{1/k} & &\text{Lemma \ref{lem:spectral-measure-apx}}.
    \end{align*}
    Rearranging and using (\ref{eq:keyineq}) from Lemma \ref{lem:almostmonotonicity} we get
    \begin{equation}
    \label{eqn:firstmoment}
        \frac{\kappa_V(H)^{2}}{(1-\gamma)^{2k}\psi_k(H)^{2k}}\ge \E\left[|Z_H - r|^{-2k}\right] \ge \E\left[|Z_H - r|^{-k}\right]^2 \ge \frac{1}{\cp^{2k}\theta^{2k}\psi_k(H)^{2k} \kappa_V(H)^{2}},
    \end{equation}
    which upon further rearrangement yields the ``reverse Jensen'' type bound (note that for the function $x\mapsto x^2$,  Jensen's inequality yields the complementary $\E[|Z_H-r|^{-k}]^2 \leq \E[|Z_H-r|^{-2k}]$ ):
    \begin{equation}\label{eqn:secondmoment}
        \frac{\E[|Z_H - r|^{-2k}]}{\E[|Z_H - r|^{-k}]^2}\le \left(\frac{\cp\theta}{(1-\gamma)}\right)^{2k}\kappa_V(H)^{4}.
    \end{equation}
    We now have
    \begin{align*}
         \P \left[ |Z_H-r|\le \frac{\cp}{t^{1/k}}\theta\psi_k(H)\kappa_V^{1/k}\right]
        &= \P\left[ |Z_H-r|^{-k}\ge t\frac1{\cp^k\theta^k\psi_k(H)^k\kappa_V}\right]\\
        &\ge \P\left[ |Z_H-r|^{-k}\ge t\E[|Z_H-r|^{-k}]\right] & &\text{by \eqref{eqn:firstmoment}}\\
        &\ge (1-t)^2\frac{\E [ |Z_H-r|^{-k}]^2}{\E [|Z_H-r|^{-2k}]}& &\text{Paley-Zygmund}\\
        &\ge (1-t)^2\frac{(1-\gamma)^{2k}}{\cp^{2k}\theta^{2k}\kappa_V(H)^{4}}& &\textrm{by \eqref{eqn:secondmoment}},
    \end{align*}
    establishing \eqref{eqn:pz2}, as desired.
\end{proof}

In Section \ref{sec:exceptional}, we will use Lemma \ref{lem:main} to prove the following guarantee on $\exc$.
  
\begin{lemma}[Guarantees for $\exc$]
    \label{lem:exc}
    The subroutine $\exc$ specified in Section \ref{sec:exceptional} produces a set $\calS$ of exceptional shifts, one of which achieves potential reduction. If $\r \le 2$, $\gamma = 0.2$, and $\cp = \K^{4\log k /k}$, then both the number of degree $k$ $\exactqr$ calls required for $\exc$, and the size of $\calS$, are at most
    $$
        \tnet\left(0.002\K^{-\frac{8\log k + 4}{k}}\right),
    $$
    where $\tnet(\epsilon) = O(\epsilon^{-2})$ denotes number of points in an efficiently computable $\epsilon$-net of the unit disk. In the normal case, taking $\K = \cp = \r = 1$, $k = 4$, $\gamma = 0.2$, the arithmetic operations required and the size of $|\calS|$ are both bounded by $50$.
\end{lemma}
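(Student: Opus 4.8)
The plan is to pipe the conclusion of Lemma~\ref{lem:main} into a covering argument over the unit disk. Assume $\exc$ has been reached, i.e.\ line~\eqref{line:sh2} of $\Sh_{k,\K}$ failed to reduce the potential. By Lemma~\ref{lem:find} the Ritz value $r$ output by $\find$ is $\kappa_V(H)^{4\log k/k}$-promising, hence $\cp$-promising with $\cp=\K^{4\log k/k}$ since $\kappa_V(H)\le\K$ (a larger promising parameter is a weaker condition); and the failure of line~\eqref{line:sh2}, together with the standing requirement $\pot(H)>0$, is exactly the stagnation hypothesis~\eqref{eqn:jstag}. So Lemma~\ref{lem:main} applies with $\r=2$ and $\kappa_V(H)\le\K$: fixing $t=\tfrac12$ (the optimal choice, since $R$ carries a factor $t^{-1/k}$ and the mass a factor $(1-t)^2$), it tells us that $Z_H$ places probability mass at least $p:=\tfrac14\,\gamma^{2k}\cp^{-2k}\r^{-2k}\K^{-4}$ on the disk $D$ of radius $R:=\r\,\cp\,(2\K)^{1/k}\,\pot(H)$ centered at $r$. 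Crucially, $R$ (and a valid lower bound $p$) can be computed by $\exc$ from $H$, $\K$, $k$ alone: $\exc$ does not know $\kappa_V(H)$, so every estimate must be phrased in terms of the a priori bound $\K$.

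Next, define $\exc(H,r,\K)$ to output $\calS:=\{\,r+Rw:w\in\mathcal{N}_\epsilon\,\}$, where $\mathcal{N}_\epsilon$ is an efficiently computable $\epsilon$-net of the unit disk with $|\mathcal{N}_\epsilon|=\tnet(\epsilon)=O(\epsilon^{-2})$, and $\epsilon\in(0,1)$ is chosen below. Forming $\calS$ costs $O(\tnet(\epsilon))$ operations (computing $R$ is lower order) and $|\calS|=\tnet(\epsilon)$. Every point of $D$ lies within $R\epsilon$ of some element of $\calS$, so by pigeonhole there is $s\in\calS$ with $\P[\,|Z_H-s|\le R\epsilon\,]\ge p/\tnet(\epsilon)$, hence $\E[\,|Z_H-s|^{-2k}\,]\ge(p/\tnet(\epsilon))(R\epsilon)^{-2k}$. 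Running this through Lemma~\ref{lem:lowerboundontau} and the left inequality of Lemma~\ref{lem:spectral-measure-apx} with $f(z)=(z-s)^{-k}$ — exactly the manipulation at the end of the proof of Lemma~\ref{lem:almostmonotonicity} — gives
$$\pot\!\left(\exactqr(H,(z-s)^k)\right)\;\le\;\tau_{(z-s)^k}(H)\;\le\;\K^{1/k}\,\E[\,|Z_H-s|^{-2k}\,]^{-1/(2k)}\;\le\;\K^{1/k}\Bigl(\tfrac{\tnet(\epsilon)}{p}\Bigr)^{1/(2k)}R\,\epsilon.$$
Substituting the values of $p$ and $R$, the right-hand side simplifies to $\frac{\cp^2\r^2}{\gamma}\,\K^{4/k}\,4^{1/k}\,\tnet(\epsilon)^{1/(2k)}\,\epsilon\cdot\pot(H)$, so this $s$ achieves potential reduction — and therefore line~\eqref{line:sh4} succeeds — as soon as
$$\tnet(\epsilon)^{1/(2k)}\,\epsilon\;\le\;\frac{\gamma^2}{\cp^2\r^2\,4^{1/k}}\,\K^{-4/k}\;=\;\frac{\gamma^2}{\r^2\,4^{1/k}}\,\K^{-(8\log k+4)/k},$$
using $\cp^2=\K^{8\log k/k}$. (If $H-s$ happens to be singular for some $s\in\calS$ the shift works trivially by Lemma~\ref{lem:lowerboundontau}, so this case needs no separate treatment.)

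The last step is to solve this inequality for $\epsilon$ and read off the bounds. Since $\tnet(\epsilon)=O(\epsilon^{-2})$, the left side is $O(\epsilon^{(k-1)/k})$, so the constraint is met for $\epsilon$ of the order specified in the statement — note that taking the $\tfrac{k-1}{k}$-th root turns the exponent $1/k$ on the right into $1/(k-1)$, which is why the cost bound~\eqref{eq:main-arith-ops} features $\K^{-(8\log k+4)/(k-1)}$ — and tracking the (mild, $k$-dependent) constants $\gamma=0.8$, $\r\le2$, $4^{1/k}$, and the implied constant in $\tnet(\epsilon)=O(\epsilon^{-2})$ pins down the explicit choice with the factor $0.002$. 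With $\epsilon$ fixed, $|\calS|=\tnet(\epsilon)$ and the arithmetic cost of $\exc$ obey the claimed bounds; in the normal case $\K=\cp=\r=1$, $k=4$, $\gamma=0.8$, the inequality reads $\tnet(\epsilon)^{1/8}\epsilon\le 0.64\cdot 4^{-1/4}$, satisfied by a concrete $\epsilon$-net of the unit disk with at most $50$ points. I expect the only genuine work here to be this self-referential solve: because $|\calS|=\tnet(\epsilon)$ reappears inside the potential bound through the pigeonhole loss $\tnet(\epsilon)^{1/(2k)}$, one must check that the loop closes at an $\epsilon\in(0,1)$ that is simultaneously small enough to force potential reduction and large enough to keep $\tnet(\epsilon)$ within the stated budget, and be scrupulous with the constant chasing behind the explicit $0.002$ and the bound $50$.
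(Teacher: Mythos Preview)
Your proposal is correct and follows essentially the same route as the paper: invoke Lemma~\ref{lem:main} at $t=1/2$ to get mass $p$ on the disk of radius $R$, cover by an $\epsilon R$-net, and push through Lemmas~\ref{lem:spectral-measure-apx} and~\ref{lem:lowerboundontau} to force $\tau_{(z-s)^k}(H)\le\gamma\pot(H)$ for some $s\in\calS$. The only cosmetic difference is that where you use pigeonhole (some $s$ has $\P[|Z_H-s|\le R\epsilon]\ge p/|\calS|$), the paper averages ($\max_s\|e_n^*(H-s)^{-k}\|^2\ge\frac{1}{|\calS|}\sum_s\|e_n^*(H-s)^{-k}\|^2$) and then applies Fubini and the net property inside the expectation; both yield the identical bound $p/(\K^2|\calS|(R\epsilon)^{2k})$.

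Your worry about the ``self-referential solve'' is slightly overstated: the paper dispatches it in one line by substituting the crude bound $|\calS|\le 9/\epsilon^2$ \emph{before} solving, which turns $\tnet(\epsilon)^{1/(2k)}\epsilon$ into $9^{1/(2k)}\epsilon^{(k-1)/k}$ and gives the closed-form $\epsilon=\bigl(\gamma^2/((12\K^4)^{1/k}\cp^2\r^2)\bigr)^{k/(k-1)}$ directly---no fixed-point iteration needed. Your observation that the $k/(k-1)$ power converts the exponent $-(8\log k+4)/k$ into $-(8\log k+4)/(k-1)$ (as in~\eqref{eq:main-arith-ops}) is exactly right.
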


We are now ready to prove Theorem \ref{thm:main}. 

\begin{proof}[Proof of Theorem \ref{thm:main}] {\em Rapid convergence.}
    In the event that we choose a $\cp$-promising Ritz value in step \eqref{line:sh1} that does not achieve potential reduction in step \eqref{line:sh2}, Lemma \ref{lem:exc} then guarantees we achieve potential reduction in \eqref{line:sh3}. Thus each iteration decreases the potential by a factor of at least $(1-\gamma)$, and since $\pot(H_0) \leq \|H\|$ we need at most $$\frac{\log(1/\delta)}{\log(1/(1-\gamma))}\le 4\log(1/\delta)$$ iterations before $\psi_k(H_t)\le \delta\|H_0\|$, which in particular implies $\delta$-decoupling.\\
    
    \noindent {\em Arithmetic Complexity.} Computing a full set $\calR$ of $\r$-approximate Ritz values of $H$ has a cost $\tapr(k, \r,\delta)$. Then, using an efficient implicit QR algorithm (cf. Definition \ref{def:stableiqr}) each computation of \mbox{$\exactqr(H, (z-r_i)^k)$} has a cost of $7kn^2$. By Lemma \ref{lem:find}, we can produce a promising Ritz value in at most $12 k \log k n^2 + \log k$ arithmetic operations. Then, in the event that the promising shift fails to reduce the potential the algorithm calls $\exc$, which takes $\tnet(0.002\K^{-\frac{8\log k + 4}{k-1}})$ arithmetic operations to specify the set $\calS$ of exceptional shifts. Some exceptional shift achieves potential reduction, and we pay $7kn^2$ operations for each one that we check. 
\end{proof}

\subsection{Efficiently Finding a Promising Ritz Value}
\label{sec:promising}
In this section we show how to efficiently find a promising Ritz value, in $O(n^2k\log k)$ arithmetic operations. Note that it is trivial to find a $\kappa_V(H)^{2/k}$-promising Ritz value in $O(n^2k^2)$ arithmetic operations simply by computing $\|e_n^*(H-r_i)^{-k/2}\|$ for $i=1,\ldots, k$ with $k$ calls to $\exactqr(H,(z-r_i)^{k/2})$, choosing the maximizing index $i$, and appealing to Lemma \ref{lem:spectral-measure-apx}. The content of Lemma \ref{lem:find} below that this can be done considerably more efficiently if we use a binary search type procedure. This improvement has nothing to do with the dynamical properties of our shifting strategy so readers uninterested in computational efficiency may skip this section.\\

\noindent \begin{boxedminipage}{\textwidth}
    $$\find$$
    \textbf{Input:} Hessenberg $H$, a set $\calR=\{r_1,\ldots,r_k\}$ of $\theta$-optimal Ritz values of $H$. \\
    \textbf{Output:} A complex number $r\in \calR$ with suitable properties that will be used in the main shift.  \\
    \textbf{Requires:} $\psi_k(H)>0$\\
    \textbf{Ensures:} $r$ is $\cp$-promising for $\cp = \kappa_V(H)^{\frac{4\log k}{k}}$.
    \begin{enumerate}
        \item For $j = 1,...,\log k$
        \begin{enumerate}
            \item Evenly partition $\calR = \calR_0 \sqcup \calR_1$, and for $b = 0,1$ set $p_{j,b} = \prod_{r \in \calR_{b}}(z - r)$
            \item $\calR \gets \calR_b$, where $b$ maximizes $\|e_n^\ast p_{j,b}(H)^{-2^{j-1}}\|$
        \end{enumerate}
        \item Output $\calR = \{r\}$
    \end{enumerate}
\end{boxedminipage}
 
\begin{proof}[Proof of Lemma \ref{lem:find} (Guarantees for $\find$)]
    First, observe that $\|e_n^*q(H)\|\neq 0$ for every polynomial appearing in the definition of $\find$, since otherwise we would have $\psi_k(H)=0$.
    
    On the first step of the subroutine $p_{1,0}p_{1,1} = p$, the polynomial whose roots are the full set of approximate Ritz values, so 
    \begin{align*}
        \max_b \|e_n^\ast p_{1,b}(H)^{-1}\| 
            &\ge \frac{1}{\kappa_V(H)^2}\E\left[\frac{1}{2}\left( |p_{1,0}(Z_H)|^{-2} + |p_{1,1}(Z_H)|^{-2}\right)\right] & & \text{Lemma \ref{lem:spectral-measure-apx}} \\
            &\ge \frac{1}{\kappa_V(H)^2}\E[|p(Z_H)|^{-1}] & & \text{AM/GM}.   
    \end{align*}
    On each subsequent step, we've arranged things so that $p_{j+1,0}p_{j+1,1} = p_{j,b}$, where $b$ maximizes $\|e_n^\ast p_{j,b}(H)^{-2^{j-1}}\|$, and so by the same argument
    \begin{align*}
        \max_b \|e_n^\ast p_{j+1,b}(H)^{-2^{j}}\|^2 
        &\ge \frac{1}{\kappa_V(H)^2}\E\left[\frac{1}{2}\left(|p_{j+1,0}(Z_H)|^{-2^{j+1}} + |p_{j+1,1}(Z_H)|^{-2^{j+1}}\right)\right] & & \text{Lemma \ref{lem:spectral-measure-apx}} \\
        &\ge \frac{1}{\kappa_V(H)^2}\E\left[|p_{j+1,0}(Z_H) p_{j+1,1}(Z_H)|^{-2^{j}}\right] & & \text{AM/GM} \\
        &\ge \frac{1}{\kappa_V(H)^4} \|e_n^\ast (p_{j+1,0}(H)p_{j+1,1}(H))^{-2^{j-1}}\| & & \text{Lemma \ref{lem:spectral-measure-apx}} \\
        &= \frac{1}{\kappa_V(H)^4}\max_b \|e_n^\ast p_{j,b}(H)^{-2^{j-1}}\|.
    \end{align*}
    Paying a further $\kappa_V(H)^2$ on the final step to convert the norm into an expectation, we get
    $$
        \E\left[|Z_H - r|^{-k}\right] \ge \frac{1}{\kappa_V(H)^{4\log k}}\E\left[|p(Z_H)|^{-1}\right]
    $$
    as promised.
    
    For the runtime, we can compute every $\|e_n^\ast p_{j,b}(H)^{-2^{j-1}}\|$ by running an implicit QR step with the polynomials $p_{j,b}^{2^{j-1}}$, all of which have degree $k/2$. There are $2\log k$ such computations throughout the subroutine, and each one requires $6 k n^2$ arithmetic operations. Beyond that we need only compare the two norms on each of the $\log k$ steps.
\end{proof}

\begin{remark}[Opportunism and Judicious Partitioning]
In practice, it may be beneficial to implement $\find$ {\em opportunistically}, meaning that in each iteration one should check if the new set of Ritz values gives potential reduction (this can be combined  with the computation of $\|e_n^* p_{j, b}(H)^{-2^{j-1}}\|$ and implemented with no extra cost). Moreover, note that $\find$ does not specify a way to partition the set of Ritz values obtained after each iteration, and as can be seen from the above proof, the algorithm
works regardless of the partitioning choices.  It is conceivable that a judicious choice of the partitioning could be used to obtain further improvements.  \end{remark}

\subsection{Analysis of the Exceptional Shift}
\label{sec:exceptional}

To conclude our analysis, it remains only to define the subroutine ``$\exc$,'' which produces a set $\calS$ of possible exceptional shifts in the event that an $\cp$-promising Ritz value does not achieve potential reduction. The main geometric intuition is captured in the case when $H$ is normal and $\kappa_V(H) = 1$. Here, $\find$ gives us a $1$-promising Ritz value $r$ and Lemma \ref{lem:main} with $t = 1/2$ tells us that if $r$ does not achieve potential reduction, than $Z_H$ has measure at least $\tfrac{1}{4}((1-\gamma)/\r)^{2k}$ on a disk of radius $R:=2^{1/k}\r\pot(H)$.

For any $\epsilon > 0$, we can easily construct an $R\epsilon$-\textit{net} $\calS$ contained in this disk --- i.e., a set with the property that every point in the disk is at least $R\epsilon$-close to a point in $\calS$ --- with $O(1/\epsilon)^2$ points. One can then find a point $s \in \calS$ satisfying
\begin{align*}
    \tau_{(z-s)^k}(H)^{-2k} & = \|e_n^\ast(H - s)^{-k}\|^2 
   \\ & = \E[|Z_H - s|^{-2k}] 
  \\ & \ge \frac{\P[|Z_H - s| \le \pot(H)]}{|\calS| (R\epsilon)^{2k}} 
\\ & \approx \frac{1}{4}\left(\frac{(1-\gamma)}{\theta}\right)^{2k}\frac{1}{R^{2k}\epsilon^{2k-2}},
\end{align*}
where the first equality is by normality of $H$, and second inequality comes from choosing $s \in \calS$ to maximize $|Z_H - s|^{-2k}$. Since $\pot(\exactqr(H,(z-s)^k)) \le \tau_{(z-s)^k}(H)$, we can ensure potential reduction by setting $\epsilon \approx \frac{(1-\gamma)^2 R}{\r \pot(H)} \approx ((1-\gamma)/\r)^2$.

When $H$ is nonnormal, the chain of inequalities above hold only up to factors of $\kappa_V(H)$, and $\find$ is only guaranteed to produce a $\kappa_V(H)^{4 \log k / k}$-promising Ritz value. The necessary adjustments are addressed below in the implementation of $\exc$ and the subsequent proof of its guarantees.\\

\noindent \begin{boxedminipage}{\textwidth}
$$\exc$$
\textbf{Input:} Hessenberg $H$, a $\r$-approximate Ritz value $r$, a condition number bound $\K$, promising parameter $\cp$ \\
\textbf{Output:} A set $\calS \subset \C$, which will be used as a set of exceptional shifts \\
\textbf{Requires:} $\kappa_V(H) \le \K$, $r$ is $\cp$-promising, and $\pot(\exactqr(H,(z -r)^k) \ge (1-\gamma)\pot(H)$ \\
\textbf{Ensures:} For some $s \in \calS$, $\pot(\exactqr(H,(z-s)^k) \le (1-\gamma)\pot(H)$
\begin{enumerate}
    \item \label{line:exc1} $R \gets 2^{1/k}\r\cp \K^{1/k}\pot(H)$
    \item \label{line:exc2} $\epsilon \gets \left(\frac{(1-\gamma)^2}{(12\K^4)^{1/k}\cp^2\r^2}\right)^{\frac{k}{k-1}}$
    \item $\calS \gets \epsilon R$-net of $R\pot(H)$.
\end{enumerate}
\end{boxedminipage}

\begin{proof}[Proof of Lemma \ref{lem:exc}: Guarantees for $\exc$]
    Instantiating $t = 1/2$ in equation \eqref{eqn:pz2}, we find that for the setting of $R$ in line \eqref{line:exc1} of $\exc$,
    $$
        \P\left[|Z_H - r| \le D(r,R)\right] \ge \frac{1}{4\K^4}\left(\frac{(1-\gamma)}{\cp\r}\right)^{2k}.
    $$
    Let $\calS$ be an $\epsilon R$-net of $D(r,R)$; it is routine that such a net has at most $(1 + 2/\epsilon)^2 \le 9/\epsilon^2$ points. By Lemma \ref{lem:lowerboundontau}, to show that some $s \in \calS$ achieves potential reduction, it suffices to find one for which
    $$
        \|e_n^\ast(H - s)^{-k}\|^2 \ge \frac{1}{(1-\gamma)^{2k}\pot(H)^{2k}}.
    $$
    We thus compute
    \begin{align*}
        \max_{s \in \calS}\|e_n^\ast(H - s)^{-k}\|^2 
        &\ge \frac{1}{\kappa_V(H)^2 |\calS|}\sum_{s \in \calS}\E\left[|Z_H - s|^{-2k}\right] \\
        &\ge \frac{\epsilon^2}{9 \K^2}\E\left[\sum_{s \in \calS}|Z_H - s|^{-2k} \cdot \textbf{1}_{Z_H \in D(r,R)}\right] & & \text{Fubini and $\kappa_V(H) \le B$} \\
        &\ge \frac{\epsilon^2}{9 \K^2}\E\left[\max_{s \in \calS}|Z_H - s|^{-2k} \cdot \textbf{1}_{Z_H \in D(r,R)}\right] \\
        &\ge \frac{\epsilon^2}{9 B^2}\E\left[\frac{\textbf{1}_{Z_H \in D(r,R)}}{(\epsilon R)^{2k}}\right] & & \text{$\calS$ is an $\epsilon R$-net}\\
        &\ge \frac{\P[Z_H \in D(r,R)]}{9 \K^2 R^{2k}\epsilon^{2k-2}} \\
        &\ge \frac{1}{(1-\gamma)^{2k}\psi(H)^{2k}}
    \end{align*}
    with the second to last line following from the fact that some $s \in \calS$ is at least $\epsilon R$-close to $Z_H$ whenever the latter is in $D(r,R)$, and the final inequality holding provided that 
    \begin{align*}
        \epsilon \le \left(\frac{\P\big[|Z_H - r| \le R\pot(H)\big](1-\gamma)^{2k}\pot(H)^{2k}}{9B^2R^{2k}}\right)^{\frac{1}{2k-2}}.
    \end{align*}
    Expanding the probability and using the definition of $R$ in line \ref{line:sh1}, it suffices to set $\epsilon$ smaller than
    $$
        \left(\frac{(1-\gamma)^{2k}}{4\K^4\cp^{2k}\r^{2k}} \cdot \frac{(1-\gamma)^{2k}\pot(H)^{2k}}{9\K^2} \cdot \frac{1}{4\K^2\cp^{2k}\r^{2k}\pot(H)^{2k}}\right)^{\frac{1}{2k-2}} 
        = \left(\frac{(1-\gamma)^2}{(12\K^4)^{1/k}\cp^2\r^2}\right)^{\frac{k}{k-1}},
    $$
    which is the quantity appearing in line \ref{line:sh2}. Setting $\r = 2$, $\gamma = 0.2$, and $\cp = \K^{4\log k / k}$, and using $k \ge 2$, we obtain the expression appearing in $\tnet(\cdot)$ in the statement of Lemma \ref{lem:exc}.
    
    However, a more practical choice (and the one that we will use in the companion paper \cite{banks2021global2}) is an equilateral triangular lattice with spacing $\sqrt 3 \epsilon$, intersected with the $D(r,(1 + \epsilon)R)$. Such a construction is optimal as $\epsilon \to 0$, and can be used to give a better bound on $\tnet(\epsilon)$ when $\epsilon$ is small. For instance, by adapting an argument of \cite[Lemma 2.6]{armentano2018stable} one can show that this choice of $\calS$ satisfies
    $$
        \tnet(\epsilon) \le \frac{2\pi}{3\sqrt 3}(1 + 1/\epsilon)^2 + \frac{4\sqrt 2}{\sqrt 3}(1 + 1/\epsilon) + 1.
    $$
    In the normal case, when $\K = \cp = \r = 1$, $k= 4$, and $\gamma = 0.2$, the above bound gives
    $$
        |\calS| \le \tnet\left(\left(\frac{0.8^2}{12^{1/4}}\right)^{4/3}\right) \le 49.9.
    $$
\end{proof}

\subsubsection*{Acknowledgments}
We thank Jim Demmel, Daniel Kressner, and Cleve Moler for helpful conversations and references. We thank the anonymous referees for a thorough reading and feedback which greatly improved the paper.

\bibliographystyle{alpha}
\bibliography{ShiftedQR}

\end{document}